\documentclass{article}

\pdfoutput=1
\usepackage{graphicx}
\usepackage{amsmath, color, soul}
\usepackage{amsfonts}
\usepackage{amscd}
\usepackage{epic}
\usepackage{eepic}
\usepackage{color}

\def\R{\mathbb{R}}
\def\BbR{\mathbb{R}}
\def\BbN{\mathbb{N}}

\def\BbZ{\mathbb{Z}}

\def\e{\varepsilon}
\def\a{\alpha}
\def\b{\beta}

\def\vp{\varphi}

\def\E{\mathcal{E}}
\def\D{\mathcal{D}}
\def\F{\mathcal{F}}

\def\SS{\Sigma}
\def\bE{{\mathbb E}}
\def\bP{{\mathbb P}}
\def\uJ{\underline{J}}

\def\CSJ{{\rm CSJ}}

  \def\sd#1#2{#1\setminus #2}
 
\usepackage{amsthm}

\theoremstyle{plain}
\newtheorem{Th}{Theorem}[section]
\newtheorem{Lem}[Th]{Lemma}
\newtheorem{Cor}[Th]{Corollary}
\newtheorem{Prop}[Th]{Proposition}
\newtheorem{Rem}[Th]{Remark}
\newtheorem{rem}[Th]{Remark}

\theoremstyle{definition}
\newtheorem{Def}[Th]{Definition}
\newtheorem*{Not}{Notation}
\newtheorem{Ex}[Th]{Example}
\newtheorem{Assum}[Th]{Assumption}

\theoremstyle{remark}
 
\def\thm{\begin{Th}}
\def\endthm{\end{Th}}
\def\lemma{\begin{Lem}}
\def\endlemma{\end{Lem}}
\def\cor{\begin{Cor}}
\def\endcor{\end{Cor}}
\def\prop{\begin{Prop}}
\def\endprop{\end{Prop}}
\def\definition{\begin{Def}}
\def\enddefinition{\end{Def}}
\def\remark{\begin{Rem}}
\def\endremark{\end{Rem}}
\def\example{\begin{Ex}}
\def\endexample{\end{Ex}}
\def\demo{\begin{proof}}
\def\enddemo{\end{proof}}

\def\notation{\begin{Not}}
\def\endnotation{\end{Not}}
\def\assumption{\begin{Assum}}
\def\endassumption{\end{Assum}}

\def\sX{{\mathcal X}}
\def\sE{{\mathcal E}}
\def\sF{{\mathcal F}}
\def\sN{{\mathcal N}}

\def\bR{{\mathbb R}}

\def\bP{\mathbb P}
\def\bE{\mathbb E}

\def\diam{{\rm diam}}

\def\<{\langle}
\def\>{\rangle}
\def\1{1\!\!\!\!1}

\def\eps{\varepsilon}

\def\be{\begin{equation}}
\def\ee{\end{equation}}

\begin{document}

\title{\bf  Heat kernel lower bound estimates for  symmetric pure jump processes  via averaged jump kernels }
 
\author{{\bf Zhen-Qing Chen}  and {\bf Jun Kigami} } 
 
\date{June 22, 2026}
 
\maketitle 

\begin{abstract}  
We derive a heat kernel lower bound  estimate for symmetric  pure jump processes on general volume doubling metric measure spaces with possible degenerate  and/or singular jump kernels using averaged jump kernels. As an application, the main result of this paper is applied to derive a lower bound estimate for the transition density function of the trace of Brownian motions on Sierpinski gaskets on the bottom of the Sierpinski gasket.

\bigskip

\noindent\textbf{Keywords:}
Heat kernel;   jump kernel,  Dirichlet form; Sierpinski gasket, trace process

\bigskip
\noindent {\bf AMS 2020 Mathematics Subject Classification}: Primary
 60J35, 60J76, Secondary 31E05, 31C25, 35K08.
 
\end{abstract} 

\section{Introduction}\label{INT}

In this paper, a jump process refers to a strong Markov process that has discontinuous sample paths and does not have diffusive component. The infinitesimal generator of a jump process is a non-local operator that does not have a differential operator component. A L\'evy process without Gaussian component is an example of a jump process in  Euclidean spaces. In general, the jump measure of a jump process can be state-dependent. In recent years, there are lots of activities in the study of jump processes both in probability theory and in analysis, from the theoretic as well as applied perspectives. An important direction of these studies is the heat kernel analysis for jump processes and non-local operators. \par

 Suppose that $(\sX, d)$ is locally compact separable metric space and $m$ is Radon measure on $\sX$ with full support. For $x\in \sX$ and $r>0$, denote by $B(x, r)$ the open ball centered at $x$ with radius $r>0$. Consider a  symmetric regular Dirichlet form $(\E, \F)$ on $L^2(\sX; m)$ of the following pure jump type:
\[
\E(u, v) = \int_{\sX \times \sX}  (u(x) - u(y))(v(x) - v(y)) J(x, y) m(dx)m(dy)
\quad \hbox{for } u, v \in \F.
\]
Here $J(x, y)$ is a non-negative symmetric kernel defined on $\sX \times \sX \setminus {\rm diag}$, where ${\rm diag}:=\{(x, x): x\in \sX\}$ denotes the diagonal set of the product space $\sX \times \sX$. Denote by $X$ the symmetric Hunt process associated with  $(\E, \F)$ on $L^2(\sX; m)$.\par

Suppose that there is an increasing function $V(r)$ on $[0, \infty)$ that satisfies doubling and reverse doubling property so that 
\begin{equation} \label{e:1.1}
m(B(x, r)) \asymp V(r) \quad \hbox{for all } x\in \sX\,\, \hbox{and}\,\, 0 < r< {\rm diam}(\sX).
\end{equation}
It is shown in \cite{CK03, CK08} that for $0<\alpha <2$, if 
\begin{equation} \label{eI:1.2}
J(x, y) \asymp \frac{1}{V(d(x, y))d(x, y)^\alpha} \quad \hbox{on } \sX \times \sX \setminus {\rm diag},
\end{equation} 
then the symmetric Hunt process $X$ has a jointly continuous transition density function $p(t, x, y)$ with respect to $m$ and that
\begin{equation} \label{e:1.3}
p(t, x, y)  \asymp \frac1{V(t^{1/\alpha})} \wedge \frac{t}{V(d(x, y))d(x, y)^\alpha} \quad \hbox{on }  (0, {\rm diam}(\sX)) \times \sX \times \sX.
\end{equation}

Conversely, if the symmetric Hunt process $X$ has a jointly continuous transition density function $p(t, x, y)$ with respect to $m$  that satisfies the two-sided estimates \eqref{eI:1.2}, then \eqref{e:1.1} holds. Here for $a, b\in \R$, $a \wedge b :=\min \{ a, b \}$. The notation $\asymp$ is as explained at the end of this introduction. As a particular case, when $(\sX, d, m)$ is Ahlfors $\eta$-regular, that is, when $V(r)=r^\eta$ in \eqref{e:1.1}, we have 
 $$
 J(x, y) \asymp \frac{1}{d(x, y)^{\eta + \alpha}} \quad \hbox{on } \sX \times \sX \setminus {\rm diag}
 $$
 if and only if the symmetric Hunt process $X$ has a jointly continuous transition density function $p(t, x, y)$ with respect to $m$ satisfying
\begin{equation} \label{e:1.4} 
 p(t, x, y)  \asymp t^{-\eta/ \alpha} \wedge \frac{t}{d(x, y)^{\eta+ \alpha}}  
 \quad \hbox{on }  (0, {\rm diam}(\sX)^\alpha ) \times \sX \times \sX.
  \end{equation}
 In this case, the heat kernel estimates can be restated as 
 \begin{equation} \label{e:1.5} 
 p(t, x, y)  \asymp t^{-\eta/ \alpha} \wedge tJ(x, y)
 \quad \hbox{on }  (0, {\rm diam}(\sX)^\alpha ) \times \sX \times \sX.
 \end{equation}
Such a symmetric Hunt process $X$  is called symmetric $\alpha$-stable-like process on $\sX$ in \cite{CK03}.
 The above results have been further extended in \cite{CKW1} allowing general volume doubling measure instead of uniform volume comparability condition \eqref{e:1.1} as well as more general scale function $\phi (r)$ (alas mixed stable-like scaling) instead of the $\alpha$-stable scaling
 function  $r^\alpha$. Moreover, various characterizations for the two-sided heat kernel estimates and for the upper bound heat kernel estimates are obtained in \cite{CKW1} for symmetric jump diffusions on metric measure spaces. See also \cite{GHH, MSC} for related works. In all these work on the two-sided heat kernel estimates restricted to metric measure spaces $(\sX, d, m)$ that satisfy uniform volume comparability condition \eqref{e:1.1}, the jump kernel $J(x, y)$ has the property that it is comparable to $j_0(d(x, y))$ for some decreasing function $j_0$ on $(0, \infty)$ so, in particular, the jump kernel $J(x, y)$ is non-degenerate. However, there are many natural circumstance in which the symmetric jump kernel does not satisfy such a condition. While there are some results on getting (rough) heat kernel estimates for such kind of symmetric pure jump processes (see, e.g., \cite{CKW1}),
so far there is no systematic study on getting heat kernel lower bound estimates for these processes.
The purpose of this paper is to fill this gap.   More precisely, in Theorem~\ref{GT.thm10}, we will give an off-diagonal lower heat kernel estimate in terms of the averaged jump kernel $J(x, y, r)$ define by
\[
J(x, y, r) = \frac 1{m(B(x, r))m(B(y, r))}\int_{B(x, r)}\int_{B(y, r)} J(u, v)m(du)m(dv).
\]
See \eqref{INT.eq100} below for more details.

\par
 Before stating (a special case of) the main result of this paper, we present an example of 
 a symmetric jump process whose jump kernel $J(x, y)$ has the property that 
 \begin{equation}\label{e:1.6}
 0<\liminf_{|x-y|\to 0} J(x, y) <\limsup_{|x-y|\to 0} J(x, y)=\infty.
 \end{equation}
 (See Example \ref{E:3.3} below for an  example where the jump kernel $J(x, y)$ vanishes on some regions of $\R^d\times \R^d$.) Our example here is the jump kernel $J_*$ of the trace of the Brownian motion on the Sierpinski gasket $K$ on its bottom line $I$, which can be naturally identified with $[0, 1]$. See Figure~\ref{Fig00} for the Sierpinski gasket $K$  and its bottom line $I$. 

\begin{figure}\label{Fig00}
\centering
\centering
\includegraphics[width=\linewidth]{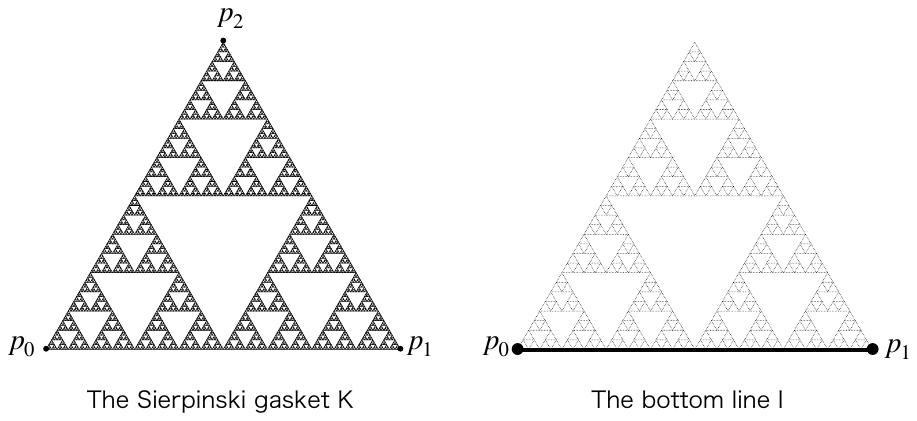}
\vspace{-20pt}
\caption{The Sierpinski gasket and its bottom line $I = \overline{p_0p_1}$}\label{Fig0}
\end{figure}

The general theory of the Beurling-Deny decomposition of trace processes was developed in \cite{CFY, CF2}.
It is well know that an isotropic $\alpha$-stable process on $\R^n$ for $\a \in (0, 2)$ can be viewed as trace process of some reflected diffusion process in the upper half space ${\mathbb H}^{n+1}$ on its boundary; see \cite{MO}. 
In particular, the trace of the reflected Brownian motion ${\mathbb H}^{n+1}$ on $\partial {\mathbb H}^{n+1}$ is the isotropic Cauchy process 
on $\R^n$.

Our trace process $X$ on $I$ is a conservative symmetric pure jump process. Moreover, its jump measure $J_*(dx, dy)$
has density $J_*(x, y)$ with respect to $\nu(dx)\nu(dy)$, where $\nu$ is the Lebesgue measure on $I$, on $I\times I \setminus {\rm diag}$ (see \cite[(1.10)-(1.12)]{CC}), as the harmonic measure on $I$ of the Brownian motion  starting from the top vertex $p_2$ on the Sierpinski gasket $K$ coincides with $\nu$.  In fact, an exact formula for $J_*(x, y)$ has recently been derived in \cite{KiKTaka} that has the singular behavior  
\eqref{e:1.6};  see Theorem~\ref{TSS.thm10} below. 
From it, one can see that  
\begin{equation} \label{e:J}
J_*(x, y) \leq \frac{c_*}{ |x-y|^{\a + 1}} \quad \hbox{for } x, y \in I
\end{equation} 
 where  $\a = \frac{\log (10/3)}{\log 2} \in(1, 2)$; see \eqref{e:3.33}.
 But the comparable lower bound does not hold for $J_*$. \par
 It is not difficult to see from the known results (see Theorem \ref{TSS.thm20} below) that the following upper bound estimate holds for the  transition density $p_* (t, x, y)$,   which is denoted by $p^I_{\nu}(t, x, y)$ in Section~\ref{TSS}, of the trace process $X$ on $I$ with respect to the Lebesgue measure $\nu$:
there exists $c_1 > 0$ such that 
\begin{equation}\label{e:1.7}
p_* (t, x, y) \le c_1\min\bigg\{t^{-1/\alpha}, \  \frac{t}{|x - y|^{1+\alpha}}\bigg\}
\end{equation}
for any $x, y \in I$ and $t \in (0, 1]$. This upper estimate is called upper heat kernel kernel estimate, {\rm UHK} for short. Moreover, there are positive  constants
$c_2$ and $c_3 $ such that
\begin{equation}\label{e:1.8}
  p_* (t, x, y) \geq c_2 t^{-1/\alpha} 
\end{equation}
for any  $t\in (0, 1]$ and $x, y \in I$ with $|x - y| \le \e_1t^{1/\alpha} $; see Theorem \ref{T:3.1}. This estimate is called neqr-diagonal lower heat kernel estimate, {\rm NLHK} for short.\par
However, in contrast to the situation for \eqref{e:1.4},  the
comparable off-diagonal lower bound estimate can not hold for $p_* (t, x, y)$ as we know that 
\begin{equation}\label{e:I.9}
\lim_{t\to 0} \frac{p_* (t, x, y)} {t} =  J_*(x, y) \quad  \hbox{ weakly on } I \times I \setminus {\rm diag}.
\end{equation} 
In view of \eqref{e:J}, \eqref{e:1.7} and \eqref{e:1.8}, as an application of the main result, Theorem \ref{GT.thm10}, of this paper, we are able to derive a lower bound off-diagonal estimate for the transition density $p_* (t, x, y)$ of the trace process $X$ on $I$ in Theorem~\ref{TSS.thm40} as follows:\,\, There exists constants $C > 0$ and $\e_2 \in (0, \e_1/2)$ such that, for any $t \in (0, 1]$ and $x, y \in I$ with $|x - y| > \e_1t^{1/\a}$,
\begin{equation}\label{e:1.10}
p_* (t, x, y) \ge CtJ_*(x, y, \e_2t^{1/\a}),
\end{equation}
where $J_*(x, y, r)$ is the averaged jump kernel defined by
\[
J_*(x, y, r) = \frac 1{\nu(B(x, r))\nu(B(y, r))}\int_{B(x, r)}\int_{B(y, r)} J(u, v)\nu(du)\nu(dv).
\]
Thanks to the following UJS condition for $J_*$, which is proven in Proposition~\ref{TSS.prop10}, i.e. 
 \begin{equation}\label{e:UJS}
 J_* (x, y) \leq \frac{2}{\nu (B(x, r))} \int_{B(x, r)} J_*(z, y) \nu (dz) \quad \hbox{for any } 0<r<|x-y|/2,
 \end{equation} 
for any $x \neq y\in I$ and $r > 0$ with $0 < r < |x - y|/2$, we may replace the averaged jump kernel $J_*(x, y, \e_2t^{1/\a})$ with the jump kernel $J(x, y)$ and obtain a counter part of \eqref{e:1.5}:
\begin{equation}\label{INT.eq30}
p_* (t, x, y) \ge c_4 tJ_* (x, y)
\end{equation}
for any $t \in (0, 1]$ and $x, y \in I$ with with $|x - y| > \e_1t^{1/\a}$.
 Although this looks neater than \eqref{e:1.10}, it is less sharper because $J_*(x, y, \e_2t^{1/\a}) \ge cJ_*(x, y)$ and the averaged jump kernel $J_*(x, y, r)$ is continuous on 
 $$
 \{(x, y, r): \  x, y \in I, x \neq y, 0 < r < |x - y|/2\}
 $$
  while $J_*(x, y)$ is highly discontinuous over $I^2$.

\par

In general setting of a pure jump process with a jump kernel $J$ on a metric measure space $(\sX, d, m)$, 
our main theorem, Theorem~\ref{GT.thm10}, states that under the conservativeness of  the jump process and generalized versions of UHK \eqref{e:1.7} and NLHK \eqref{e:1.8} (see Definition \ref{S2.def10} below), we have the following off-diagonal lower hear kernel estimate; there exist constants $C > 0$, $\kappa \in (0, 1)$ and $\e_1 \in (0, \e_2)$, where $e_2$ is the constant appearing in NLHK, such that
\begin{equation}\label{INT.eq100}
p(t, x, y) \ge CtJ(x, y, \e_2\phi^{-1}(t))
\end{equation}
for any $t \in (0, \kappa\diam{\sX})$ and $x, y \in \sX$ with $d(x, y) \ge \e_1\phi^{-1}(t)$. Note that  the lower bound estimate in \eqref{INT.eq100}  is asymptotically sharp in the sense of  \eqref{e:I.9} as by Lebesgue
differentiation theorem, 
$$
\lim_{t\to 0} \frac{t J(x, y, \e_2\phi^{-1}(t))}{t} =\lim_{t\to 0}  J(x, y, \e_2\phi^{-1}(t)) = J(x, y)
\quad \hbox{for a.e. } x\not= y \hbox{ in } \sX.
$$

\medskip

The rest of this paper is organized as follows. 
In Section \ref{S:2}, we develop a general theory for the off-diagonal lower bound estimate 
\eqref{e:1.10} for symmetric pure jump processes on metric measure spaces. Some  results and  techniques 
developed in \cite{CKW1, CKW2} in the study of heat kernels for pure non-local Dirichlet forms 
are utilized.  The main result of this paper, Theorem \ref{GT.thm10} is established there. Section \ref{S:3} is devoted to the Sierpinski gasket example mentioned above. The jump kernel $J_*$ for the trace of Brownian motion on Sierpinski gasket is modified in Section \ref{S:4} so that it has an equivalent resistant form on the unit interval $I$ for which we can still get the lower bound off-diagonal estimate. In Section \ref{EIC}, an infinite Sierpinski gasket example is presented to illustrate the main result of this paper. 

\medskip

In this paper, we use $:=$ as a way of definition. For $a, b\in \R$, $a\wedge b:=\min\{a, b\}$ and $a\vee b :=\max \{a, b\}$. 
For  a set $A$ and   two non-negative real-valued functions $f$ and $g$ on $A$, we write
\[
f(x) \asymp g(x)
\]
for any $x \in A$ if there exist positive constants $c_1$ and $c_2$ such that
\[
c_1f(x) \le g(x) \le c_2f(x)
\]
for any $x \in A$. Moreover, for a metric space $(X, d)$, we denote the collection of real-valued continuous functions by $C(X, d)$ or simply $C(X)$ if the metric is obvious from a context.

\setcounter{equation}{0}
\section{Basic framework and general theory}\label{S:2}

In this section, we give our basic framework to study lower bound estimate of heat kernel associated with pure jump processes. In addition, we will present some relations between properties regarding Dirichlet forms and heat kernel estimates like the Poincar{\'e} inequality, cut-off Sobolev inequality and so on.\par
 
Let $(\sX, d)$ be a locally compact separable metric space, 
and $m$ a positive Radon measure on $\sX$ with full support.
We will refer to such a triple $(\sX, d, m)$ as a \emph{metric measure space}. 
Throughout this paper, we assume that $(\sX, d, m)$
satisfies a   volume doubling  (VD) and reversed volume doubling (RVD) property; that is, 
there are constants $c_{\ref{e:3}.2} > c_{\ref{e:3}.1} > 1$ and $L>0$ so that
\begin{equation}\label{e:3}
 c_{\ref{e:3}. 1}V(x, r) \leq V(x, Lr)\leq c_{\ref{e:3}. 2}V(x, r)  
 \quad \hbox{for all } x\in \sX \hbox{ and } r>0,
\end{equation}
 where $V(x, r):= m (B(x, r))$. Condition \eqref{e:3} is equivalent the existence of  constants $c_{\ref{e:1.2a}} \geq 1$ and $\beta_{\ref{e:1.2a}. 2} \geq \beta_{\ref{e:1.2a}. 1} > 0$ so that
\begin{equation}\label{e:1.2a}
 c_{\ref{e:1.2a}}^{-1} (R/r)^{\beta_{\ref{e:1.2a}. 1}} \leq V(x, R)/V(x, r) \leq c_{\ref{e:1.2a}}(R/r)^{\beta_{\ref{e:1.2a}. 2}}
\end{equation} 
 for all  $ x\in \sX$ and $0<r\leq R< {\rm diam}(\sX)$.\par
 
Let $(\E, \F)$ be a pure jump regular symmetric Dirichlet form on $L^2(\sX; m)$; that is, $(\E, \F)$ admits the following expression:
\begin{equation}\label{GT.eq100}
 \E(u, v) = \frac12 \int_{\sX \times \sX} (u(x) - u(y))  (v(x) - v(y))  J(dx, dy)
 \end{equation}
for any $u, v \in \F$,  where $J(dx, dy)$ is a symmetric Radon measure on $\sX \times \sX \setminus {\rm diag}$, and $\F \cap C_c(\sX))$ is dense both in $(C_c(\sX), \| \cdot \|_\infty)$ and in the Hilbert space $(\F, \E_1)$. Here $\| f\|_\infty = \sup_{x\in \sX} |f(x)|$ for $f\in C(\sX)$ and $\E_1 (u, v)= \E(u, v) + \int_{\sX} u(x) v(x) m(dx)$. In this paper, we assume 
\begin{equation}\label{GT.eq105}
J(dx, dy) = J(x, y)m(dx)m(dy)
\end{equation}
for  some non-negative symmetric measurable function $J(x, y)$ on $\sX \times \sX \setminus {\rm diag}$.

\definition\label{GT.def00}
Let $(\sX, d)$ be a locally compact separable metric space, $m$ be a positive Radon measure on $\sX$ with full support and $(\E, \F)$ be a regular Dirichlet form on $L^2(\sX, m)$. We call $(\sX,d,m,\sE, \sF)$ a measure metric Dirichpet space (MMD space in abbreviation) if the conditions \eqref{e:3}, \eqref{e:1.2a}, \eqref{GT.eq100} and \eqref{GT.eq105} are satisfied.
\enddefinition

It is well known that there is a Hunt process $X$ 
associated with the regular Dirichlet form $(\sE, \sF)$ on $L^2(\sX; m)$,
which is unique in distribution up to $\sE$-quasi-equivalence; see \cite{CF, FOT}. Denote by $\sN$ the properly exceptional set of $X$ so that the Hunt process $X = \{X_t, t\geq 0; \bP_x, x\in \sX \setminus \sN\}$ is well defined for every starting point $x\in \sX \setminus \sN$. We always represent functions in $\sF$ by its $\sE$-quasi-continuous version. For these and related basic facts about regular Dirichlet forms, we refer the reader to \cite{CF, FOT}. 
 
 \medskip
 
Let $\sX_\partial:= \sX \cup \{\partial \}$ be the one-point compactification of $\sX$. 
 Denote by $\{\sF_t\}_{ t\geq 0}$ the minimum augmented filtration generated by $X$. 
 Observe that our Dirichlet form $(\sE, \sF)$ admits no killing measure.
 The jump kernel $J(x, y)$ describes how the associated Hunt process $X$ jumps through the following L\'evy system formula:
 for any non-negative $\{\sF_t\}_{t\geq 0}$-predictable process $Y$ and non-negative Borel measurable function $f$ on $\sX_\partial \times \sX_\partial$
 with $f(x, x)=0$ for $x\in \sX_\partial$
 \begin{equation} \label{e:2.3}
\bE_x \left[ \sum_{s>0} Y_s f(X_{s-}, X_s) \right] = \bE_x \int_0^\infty Y_s  \left( \int_{\sX} f(X_s, y) J(X_s, y) m(dy) \right) ds
 \end{equation}
 for every $x\in \sX \setminus \sN$;  see \cite[(4.3.11) and (A.3.32)]{CF}. \par
  We are going to use the following special case of \eqref{e:2.3} to prove our main theorems in this paper.
 
\lemma\label{GT.lemma100}
Let $\vp: \sX \to \BbR$ be a non-negative Borel measurable function and let $D \subseteq \sX$ be a proper open set. Denote by $\tau_D$ the first exit time from $D$ by $X$, i.e. 
\[
\tau_D = \inf\{t\geq 0:\,X_t\notin D\}.
\]
Then
 \begin{eqnarray} \label{e:exitr} 
&& \bE_x\left[\vp(X_{\tau_D}); \tau_D \le T, X_{\tau_D} \neq X_{\tau_D-}\right]  \nonumber \\
 &&  =\,  \bE_x\left[\int_0^{T \wedge \tau_D}  \int_{D^c} \vp(y)J(X_s, y)m(dy)ds  \right]
 \end{eqnarray}
for any $x \in \sd{D}{\sN}$ and $T \in (0, \infty]$. In particular, if $U$ is a Borel subset of $\sX$ and $U \cap \overline{D} = \emptyset$, then
\begin{equation}\label{GT.eq15}
P_x(X_{T \wedge \tau_D} \in U) = \bE_x\left[\int_0^{T \wedge \tau_D}  \int_{U} J(X_s, y)m(dy)ds  \right].
\end{equation}
\endlemma

\demo 
On the event  $\{X_{\tau_D-}\in \partial D\}$, $\tau_D$ is predictable (as it is the increasing limit of $\tau_{D_n}$ where $\{D_n\}$ is a sequence of relatively compact subset sets of $D$ that increases to $D$) so by the quasi-left-continuity of the Hunt process $X$ we have $X_{\tau_D}= X_{\tau_D-}$. Letting $Y_s = \1_{[0, T \wedge \tau_D]}(s)$ and $f(x, y) = \1_{D}(x)\vp(y)\1_{D^c}(y)$, we have \eqref{e:exitr} from \eqref{e:2.3}. The second equality \eqref{GT.eq15} follows if $\vp = \1_U$.
\enddemo

We say a Hunt process $X$ satisfies the the absolute continuity
condition (AC) if $X$ can be defined to start from every point $x\in \sX$ and that $\bP_x (X_t \in dy) \ll m(dy)$
for every $x\in \sX$ and $t>0$. 
  Under (AC) condition, every $\sE$-polar set is polar and the L\'evy system
formulas \eqref{e:2.3} and \eqref{e:exitr} hold with $\sN = \emptyset$; see 
 \cite[Theorems A.2.17 and Theorem 4.1.11]{CF} and  \cite[Chapter 5]{FOT}

\medskip
 
 Let $\bR_+:=[0,\infty)$, and $\phi: \bR_+\to \bR_+$ be a strictly increasing continuous
function  with $\phi (0)=0$ ,
$\phi(1)=1$
and satisfying that there exist constants $c_{\ref{e:phi}. 1}, c_{\ref{e:phi}. 2} > 0$ and $0 < \beta_{{\ref{e:phi}. 1}} \leq \beta_{{\ref{e:phi}. 2}} <\infty$  such that
\begin{equation}\label{e:phi}
c_{\ref{e:phi}. 1}\Big(\frac Rr\Big)^{\beta_{{\ref{e:phi}. 1}}} \leq
\frac{\phi (R)}{\phi (r)}  \leq c_{\ref{e:phi}. 2}\Big(\frac Rr\Big)^{\beta_{{\ref{e:phi}. 2}}} \quad \hbox{for all } 0 < r \le R. 
\end{equation}
Note that (\ref{e:phi}) is
equivalent to the existence of constants $c_1 \geq c_2>0$ and $l_0>1$ such that
\[
c_1\phi (r) \leq \phi ({l_0}r)\leq c_2\phi (r)\quad \hbox{for all} r > 0.
\]

\medskip

The following definition is taken from \cite{CKW1}.

 \begin{definition}\label{S2.def10}\rm 
 Let $(\sX,d,m,\sE, \sF)$ be an MMD space.
\begin{enumerate}  
\item[\rm (i)] We say that the Dirichlet form $(\sE, \sF)$ is conservative if its associated Hunt process is conservative. 

\item[\rm (ii)] We say that the transition density upper bound estimate ${\rm UHK}(\phi)$ holds for the Dirichlet form $(\sE, \sF)$ if the Hunt process $X$ associated with $(\sE, \sF)$  has a jointly continuous density function
 $p(t, x, y)$ on $(0, \infty )\times \sX \times \sX$ with respect to the measure $m$ and there is a positive constant $c_{\ref{e:2.6}} >0 $ so that 
\begin{equation}\label{e:2.6}
 p(t, x, y) \leq c_{\ref{e:2.6}}\left( \frac{1}{V(x, \phi^{-1}(t))} \wedge
\frac{t}{V(x, d(x,y))\phi (d(x, y))} \right)
\end{equation}
for any $t > 0$ and $x, y \in \sX$.  
Here $\phi^{-1}$ denotes the inverse function of $\phi$.
 
\item[\rm (iii)] We say that the near diagonal lower bound estimate ${\rm NLHK}(\phi)$
 holds for the Dirichlet form $(\sE, \sF)$ if the Hunt process $X$ associated with $(\sE, \sF)$  has a jointly continuous density function $p(t, x, y)$ on $(0, \infty)\times \sX \times \sX$ with respect to the measure $m$ and there exist constants $\e_{\ref{e:1.9}} \in (0,1)$ and $c_{\ref{e:1.9}} > 0$ such that  
\be\label{e:1.9} 
p(t, x ,y )\ge \frac{c_{\ref{e:1.9}}}{V(x , \phi^{-1}(t))} \quad \hbox{for any } x ,y \in \sX
\hbox{ with }  d(x, y) < \e_{\ref{e:1.9}}\phi^{-1}(t)  .
\ee

 \item[\rm (iv)]  We say that the transition density lower bound estimate for the Dirichlet transition density, ${\rm NDL}(\phi)$ for short, holds for the Dirichlet form $(\sE, \sF)$ if the Hunt process $X$ associated with $(\sE, \sF)$   has a jointly continuous density function $p(t, x, y)$ on $(0, \infty)\times \sX \times \sX$ with respect to the measure $m$ and there exist  $\e_{\ref{e:2.15}} \in (0, 1)$ and $c_{\ref{e:2.15}} > 0$ so that for every $x_0 \in \sX$, $r > 0$, $t\in (0, \phi (\e_{\ref{e:2.15}}r))$ and 
 $B = B(x_0, r)$, 
 \begin{equation}\label{e:2.15}
 p^B(t, x, y) \geq   \frac{c_{\ref{e:2.15}}}{V(x_0, \phi^{-1}(t))}  \quad \hbox{for } x, y \in B(x_0, \e_{\ref{e:2.15}} \phi^{-1}(t)).
 \end{equation}
 Here $p^B(t, x, y)$ is the transition density function of the part process $X^B$ of $X$ killed upon leaving the ball $B$. 
\end{enumerate} 
  \end{definition}

  \medskip

\begin{rem}\rm  \begin{enumerate}
\item[(i)] When the Hunt process $X$ has a a jointly continuous density function
 $p(t, x, y)$ on $\sX \times \sX$ satisfying \eqref{e:2.6}, $X$ can be refined to be a Feller process 
on $\sX$ and hence can start from every point in $\sX$. 

\item[(ii)] 
Condition ${\rm NDL}(\phi)$  implies ${\rm NLHK}(\phi)$. Indeed, suppose ${\rm NDL}(\phi)$ holds. 
Then   for any $t>0$ and any $x, y\in \sX$ with $d(x, y) <\eps \phi^{-1} (t)$, 
taking $r=\phi^{-1}(t)/\eps^2$, we get by \eqref{e:2.15} that 
$$
p(t, x, y) \geq p^{B(x, r)} (t, x, y) \geq \frac{c}{V(x , \phi^{-1}(t))}.
$$
That is,  ${\rm NLHK}(\phi)$ holds.   
\end{enumerate}

\end{rem}

 \medskip
  
\lemma\label{L:2.9}
Suppose ${\rm UHK}(\phi)$ and ${\rm NLHK}(\phi)$ hold. Then so does ${\rm NDL}(\phi)$.
\endlemma
 
 \proof  
 Let $x_0 \in \sX$, $r > 0$, and $B := B(x_0, r)$. Suppose ${\rm UHK}(\phi)$ and ${\rm NLHK}(\phi)$ hold with positive constants $c_1 = c_{\ref{e:2.6}}, c_2 = c_{\ref{e:1.9}}$, $c_3 = c_{\ref{e:phi}. 1}$, $\b_1 = \beta_{\ref{e:phi}. 1}$ and $\e = \e_{\ref{e:1.9}} \in (0, 1)$. Let $\e_0 \in (0, 1 \wedge \e/2 \wedge (1 - \e))$. By the strong Markov property of $X$, for $x, y\in B (x_0, \eps_0 \phi^{-1}(t)) $ and $0 <t \leq \phi (\eps_0 r)$, 
\begin{eqnarray*}
p^B(t, x, y) &=& p(t, x, y) -\bE_y \left[ p(t-\tau_B, x, X_{\tau_B}); \tau_B<t\right] \\
&\geq & \frac{c_2} {V(x, \phi^{-1}(t))} - c_1\, \bE_y \left[  \frac{t }{  V(x, d(x, X_{\tau_B}) \phi ( d(x, X_{\tau_B})))}; \tau_B<t\right]  \\
&\geq & \frac{c_2} {V(x, \phi^{-1}(t))} -    \frac{c_1 \phi (\eps_0 r) }{  V(x, (1-\eps)r) \phi ( (1-\eps)r)}    \\
&\geq & \frac{c_2} {V(x, \phi^{-1}(t))} -    \frac{c_1  c_3^{-1}   }{  V(x,  \phi^{-1}(t))  }  
   \left( \frac{  \eps_0  }{  1-\eps }\right)^{\b_1} ,
 \end{eqnarray*}
 where the last inequality is due to  \eqref{e:phi} and the fact that 
 $$
 (1-\eps) r\geq  (1-\eps)\eps_0^{-1} \phi^{-1}(t) \geq \phi^{-1}(t) .
 $$ 
 Taking $\eps_0$ small so that $c_1c_3^{-1} \left(\eps_0  / ( 1-\eps )\right)^{\b_1} < c_2/2$, we have for every $t \in (0, \phi (\e_0r))$ and $x, y \in B(x_0, \e_0\phi^{-1}(t))$, 
 \begin{equation}\label{e:2.19a}
 p^B(t, x, y) \geq \frac{c_2/2}{V(x, \phi^{-1}(t))}.
 \end{equation}
 Let $c_4 = c_{\ref{e:1.2a}}$ and $\b_2 = \b_{\ref{e:1.2a}. 2}$. Then
 \[
 V(x, \phi^{-1}(t)) \le V(x_0, (1 + \e_0)\phi^{-1}(t)) \le c_4(1 + \e_0)^{\b_2}V(x_0, \phi^{-1}(t).
 \]
 Combining this with \eqref{e:2.19a}, we see that
 \begin{equation}\label{e:2.19}
 p^B(t, x, y) \ge \frac{c_5}{V(x_0, \phi^{-1}(t))}
 \end{equation}
 for any $t \in (0, \phi(\e_0r))$ and $x, y \in B(x_0, \e_0\phi^{-1}(t))$, where $c_5 = c_2(2c_4(1 + \e_0)^{\b_2})^{-1}$. This shows that ${\rm NDL}(\phi)$ holds. 
 \qed 

 \medskip
 
 We next present some sufficient conditions for  ${\rm UHK}(\phi)$ and ${\rm NLHK} (\phi)$.
First, we  need some definitions.

\begin{definition}\label{GT.def10}
\begin{enumerate}
\item[\rm (i)]  We say that ${\rm J}_{\phi, \leq}$ holds if there is some constant $c_{\ref{e:1.2}} > 0$ so that 
\be\label{e:1.2} 
  J(x, y) \le \frac{c_{\ref{e:1.2}}}{V(x,d(x, y)) \phi (d(x, y))}
\ee 
for any $x, y \in \sX$.

 \item[\rm (ii)]\,\,Let $D$ be an open subset of $\sX$. Define
\[
\sF_D := \{ u : u \in \sF, u = 0 \,\,\text{for $\sE$-quasi everywhere on $D^c$}\}
\]
and
\[
\lambda_1(D) = \inf \left\{ \sE(f,f):  \,  f \in \sF_D \hbox{ with }  \|f\|_2 =1 \right\},
\]
where $\| u\|_2$ is the $L^2$-norm of $u \in L^2(K, \mu)$. Note that $\lambda_1(D)$ is the bottom of the Dirichlet spectrum for the generator of the subprocess $X^D$ of $X$ killed upon leaving $D$.

\item[\rm (iii)]\,\,
 We say that an MMD space $(\sX,d,m,\sE, \sF)$  satisfies the {\em Faber-Krahn
inequality} ${\rm FK}(\phi)$, if there exist constants $c_{\ref{e:fki}} > 0$, $\sigma_{\ref{e:fki}} \in (0, 1/2)$ and
$\theta > 0$ such that for any $r \in (0, {\sigma_{\ref{e:fki}}}\,\diam{\sX})$, any ball $B(x, r)$ and any open set $D \subseteq B(x, r)$, 
\be \label{e:fki}
 \lambda_1(D) \ge \frac{c_{\ref{e:fki}}}{\phi(r)} \Big(\frac{V(x,r)}{m(D)}\Big)^{\theta}.
\ee

 \item[\rm (iv)]\,\, Let $\sF_b = \sF\cap L^\infty(\sX, m)$. We say that an MMD space $(\sX,d,m,\sE, \sF)$   satisfies the 
 {\em weak Poincar\'e inequality}
(${\rm PI} (\phi)$)
  if there exist constants $c_{\ref{eq:PIn}} > 0 $, $\sigma_{\ref{eq:PIn}} \in (0, 1/2)$ and $\delta \ge 1$ such that
for any $r \in (0, \sigma_{\ref{eq:PIn}}\,\diam{\sX})$, any $x \in \sX$ and $f \in \sF_b$,
\begin{equation}\label{eq:PIn}
\int_{B_r} (f(x) - \bar{f}_{B_r})^2m(dx) \le c_{\ref{eq:PIn}}\phi(r)\int_{{B_{\delta r}}\times {B_{\delta r}}} (f(y)-f(x))^2\,J(dx,dy),
\end{equation}
 where $B_r = B(x, r)$ and $\bar{f}_{B_r}= \frac{1}{m({B_r})}\int_{B_r} f(x)m(dx)$ is the average value of $f$ on $B_r$.
\end{enumerate} 
\end{definition}

\bigskip

\begin{remark}\label{R:2.9} \rm   
 Under the assumption of VD, \eqref{e:1.2a} and \eqref{e:phi}, by  \cite[Propositions 7.3 and  7.6]{CKW1}, 
 ${\rm PI} (\phi)$ implies  ${\rm FK} (\phi)$.
\end{remark}

For any $f\in \sF_b$,  there is a unique 
Borel measure $\mu_{\<f\>}$ (called the \emph{energy measure} of $f$) on $\sX$ satisfying
$$
\int_\sX g \, d\mu_{\<f\>}=\sE(f, fg)-\frac 12\sE(f^2,g) \quad \hbox{for any }
  g\in \sF_b;
$$
see \cite{CF, FOT}.

\begin{definition}(\cite[Definition 1.5]{CKW1})
  \rm Let $U$ and $V$ be open subsets sets of $\sX$ with $\overline{U} \subseteq V$. A measurable function $\vp: \sX \to [0, 1]$ is called a cut-off function for $U \subseteq V$ if $\vp \equiv 1$ on $U$ and $\vp \equiv 0$ on $V^c$. We say that the condition $\CSJ(\phi)$ holds if there exist constants $C_0\in (0,1]$ and $C_1, C_2>0$ such that for every $0 < r \le R$, almost all $x_0 \in \sX$ and any $f \in \sF$, there exists
a cut-off function $\vp \in \sF_b$ for $B(x_0,R) \subset B(x_0,R+r)$ so that the following holds:
\be \label{e:csj1} \begin{split}
 \int_{B(x_0,R+(1+C_0)r)} f^2 \, d\mu_{\<\vp\>}
\le \,\,&C_1 \int_{U\times U^*}(f(x)-f(y))^2\,J(dx,dy) \\
&+ \frac{C_2}{\phi(r)}  \int_{B(x_0,R+(1+C_0)r)} f(x)^2m(dx),
\end{split}
\ee
where $U=B(x_0,R+r)\setminus B(x_0,R)$ and $U^*=B(x_0,R+(1+C_0)r)\setminus B(x_0,R-C_0r)$.
\end{definition}

 \begin{remark}\label{R:2.11}  \rm 
 If $\beta_{{\ref{e:phi}. 2}} < 2$, where $\beta_{{\ref{e:phi}. 2}}$ appears in \eqref{e:phi}, then ${\rm CJS}(\phi)$ condition  is automatically satisfied under condition $J_{\phi, \leq}$ in view of \cite[Remark 1.7]{CKW1}. 
\end{remark}

 The following theorem is established in \cite[Theorem 1.15]{CKW1}.   
 
\begin{thm} {\rm (\cite[Theorem 1.15]{CKW1})} \label{T:2.12}
  Suppose that the metric space $(\sX, d)$ is unbounded.  
  The following are equivalent:
\begin{enumerate}
\item[\rm (i)]  ${\rm FK}(\phi)$, ${\rm J}_{\phi,\le}$ and ${\rm CSJ} (\phi)$ hold.

 \item[\rm (ii)]    $(\sE, \sF)$ is conservative   and  ${\rm UHK}(\phi)$ holds 
 \end{enumerate} 
If one of them holds, then there is a constant $c\geq 1$ so that for all $x\in \sX$ and $r>0$, 
\begin{equation} \label{e:E}
c^{-1} \phi (r) \leq \bE_x [ \tau_{B(x, r)}] \leq c \phi (r),
\end{equation} 
where $\tau_{B(x, r)}:=\inf\{t>0: X_t\notin B(x, r)\}$ is the first exit time from the ball $B(x, r)$ by the process $X$. 
\end{thm}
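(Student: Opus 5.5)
This theorem is quoted from \cite[Theorem 1.15]{CKW1}; within this paper it is used as a black box. If I had to reprove it, I would follow the standard route of \cite{CKW1} and prove the two directions separately, then derive the exit time estimate \eqref{e:E} from either side.

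\emph{(ii) $\Rightarrow$ (i).} This is the easier direction.

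First, ${\rm J}_{\phi,\le}$. Conservativeness lets me identify $J$ as the weak limit of $p(t,x,y)/t$ as $t\to0$ off the diagonal. Combining this with the off-diagonal part of \eqref{e:2.6} gives $J(x,y)\le c_{\ref{e:2.6}}/(V(x,d(x,y))\phi(d(x,y)))$ for a.e.\ $(x,y)$.

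Second, ${\rm FK}(\phi)$. The on-diagonal bound $p(t,x,x)\le c/V(x,\phi^{-1}(t))$ gives a Nash-type inequality for the killed semigroup on $D\subseteq B(x,r)$. Concretely, I would bound $p^D(t,\cdot,\cdot)$ by $c/V(x,\phi^{-1}(t))$ and use the doubling \eqref{e:1.2a}. Optimizing in $t$ then yields $\lambda_1(D)\ge c\,\phi(r)^{-1}(V(x,r)/m(D))^\theta$ with $\theta$ comparable to $\beta_{\ref{e:phi}.1}/\beta_{\ref{e:1.2a}.2}$-type exponents.

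Third, ${\rm CSJ}(\phi)$. I would first derive the survival estimate: integrate \eqref{e:2.6} over $B(x,r)^c$, using conservativeness, to get $\bP_x(\tau_{B(x,r)}\le t)\le c\,t/\phi(r)$. From this I would build the cut-off function as a truncated resolvent, $\vp=\min\{1,\ \text{const}\cdot \bE_\cdot[\int_0^{\tau_{B}} e^{-s/\phi(r)}ds]/\phi(r)\}$, and check \eqref{e:csj1} by estimating its energy measure. This is the standard CKW argument.

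\emph{(i) $\Rightarrow$ (ii).} This is the hard direction.

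Step 1: ${\rm FK}(\phi)$ together with ${\rm CSJ}(\phi)$ gives the mean exit time bound $\bE_x\tau_{B(x,r)}\le c\phi(r)$. The reverse inequality $\ge c^{-1}\phi(r)$ comes from ${\rm CSJ}(\phi)$ and ${\rm J}_{\phi,\le}$, by testing $\sE$ against the cut-off function.

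Step 2: these exit time bounds yield the survival estimate $\bP_x(\tau_{B(x,r)}\le t)\le c\,t/\phi(r)$.

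Step 3: truncate the jumps at level $\rho$ (Meyer's construction). The truncated kernel bounds the error via ${\rm J}_{\phi,\le}$. Davies' method applied to the truncated process, with ${\rm FK}(\phi)$ supplying the on-diagonal bound $1/V(x,\phi^{-1}(t))$, gives an off-diagonal bound with exponential decay at scale $\rho$. Adding back the big jumps through the L\'evy system \eqref{e:2.3} and choosing $\rho$ suitably then gives \eqref{e:2.6}.

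Conservativeness follows from the survival estimate: the process stays in large balls with high probability, uniformly in time on compacts.

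\emph{The main obstacle} is the iteration in Step 3. Passing from a polynomially weak tail bound to the sharp $t/(V\phi)$ form requires a self-improving bootstrap, and it is delicate to make it work with only the weak ${\rm CSJ}(\phi)$ replacing a genuine cut-off Sobolev inequality. I would follow the self-improvement lemma of \cite{CKW1} verbatim there.

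Finally, \eqref{e:E} is exactly the two-sided mean exit time bound obtained in Step 1.
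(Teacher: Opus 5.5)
Your proposal matches the paper: the statement is not proved here at all but quoted from \cite[Theorem 1.15]{CKW1}, and your outline is a fair summary of the route taken there. Two small corrections to the sketch. First, the upper bound $\bE_x\tau_{B(x,r)}\le c\phi(r)$ needs only ${\rm FK}(\phi)$, whereas the lower bound needs ${\rm FK}(\phi)$ as well: testing $\sE$ against a cut-off function only gives averaged information, and it is the mean value inequality (from ${\rm FK}(\phi)$ plus the Caccioppoli inequality from ${\rm CSJ}(\phi)$ and ${\rm J}_{\phi,\le}$) that makes it pointwise. Second, the exit time bounds alone give only $\bP_x(\tau_{B(x,r)}\le t)\le 1-c_1+c_2t/\phi(r)$; the linear bound $c\,t/\phi(r)$ is obtained only after the truncation argument of your Step~3, or from ${\rm UHK}(\phi)$ itself.
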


\medskip

\begin{thm}\label{T:2.13}
 Suppose that the metric space $(\sX, d)$ is unbounded.  
 Suppose that ${\rm PI}(\phi) $, ${\rm J}_{\phi,\le}$ and ${\rm CSJ} (\phi)$ hold. 
 Then 
  \begin{enumerate}
  \item[\rm (i)]    $(\sE, \sF)$ is conservative   and ${\rm UHK}(\phi)$ holds;

\item[\rm (ii)]   ${\rm NDL}(\phi)$   holds.
 \end{enumerate} 
\end{thm}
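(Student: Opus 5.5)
Part (i) follows at once from earlier results. By Remark~\ref{R:2.9}, ${\rm PI}(\phi)$ together with VD, \eqref{e:1.2a} and \eqref{e:phi} gives ${\rm FK}(\phi)$. Then ${\rm FK}(\phi)$, ${\rm J}_{\phi,\le}$ and ${\rm CSJ}(\phi)$ are exactly condition (i) of Theorem~\ref{T:2.12}. Hence $(\sE,\sF)$ is conservative, ${\rm UHK}(\phi)$ holds, and the exit time estimate \eqref{e:E} is also available.

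For (ii), Lemma~\ref{L:2.9} reduces ${\rm NDL}(\phi)$ to ${\rm NLHK}(\phi)$, since ${\rm UHK}(\phi)$ is already known. I will prove ${\rm NLHK}(\phi)$ in the standard way, following \cite{CKW2}. Fix $x_0$, $r$, $B=B(x_0,r)$ and $t\asymp\phi(\e r)$, with $\e$ small. The argument has four steps.

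\textbf{Step 1 (survival).} From \eqref{e:E} and ${\rm UHK}(\phi)$, I derive $\bP_x(\tau_B>t)\ge 1/2$ for $x\in B(x_0,r/2)$ once $\e$ is small. The tail $\bP_x(\tau_{B(x,s)}\le t)\le c\,t/\phi(s)$ comes from ${\rm J}_{\phi,\le}$ plus the usual exit-time argument via the Lévy system, Lemma~\ref{GT.lemma100}. Consequently $\int_B p^B(t,x,y)\,m(dy)\ge 1/2$.

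\textbf{Step 2 (on-diagonal lower bound).} Symmetry and the semigroup property give
\[
p^B(2t,x,x)=\int_B p^B(t,x,y)^2\,m(dy)\ge \frac{\big(\int_{B(x,\rho)}p^B(t,x,y)\,m(dy)\big)^2}{V(x,\rho)}
\]
with $\rho=C\phi^{-1}(t)$. Combining the mass bound of Step 1 with the tail bound, this yields $p^B(2t,x,x)\ge c/V(x,\phi^{-1}(t))$.

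\textbf{Step 3 (regularity).} Extending the on-diagonal bound to $x\neq y$ near the diagonal needs Hölder regularity of $y\mapsto p^B(t,x,y)$ at scale $\phi^{-1}(t)$. This is the main obstacle. I would obtain it from a parabolic Harnack or Hölder estimate for caloric functions of the killed process, as in \cite[Section 4]{CKW2}. Their argument uses ${\rm PI}(\phi)$, ${\rm CSJ}(\phi)$ and ${\rm J}_{\phi,\le}$ to run a De Giorgi/Moser-type iteration, producing a weak parabolic Harnack inequality and hence Hölder continuity, with nonlocal tails controlled by ${\rm UHK}(\phi)$. Here ${\rm PI}(\phi)$ supplies the needed lower-bound mechanism, since ${\rm J}_{\phi,\ge}$ is not assumed.

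\textbf{Step 4 (conclusion).} Hölder continuity with exponent $\theta$ gives
\[
|p^B(2t,x,y)-p^B(2t,x,x)|\le c\,(d(x,y)/\phi^{-1}(t))^\theta\,\frac{1}{V(x,\phi^{-1}(t))}.
\]
Choosing $d(x,y)\le\e\phi^{-1}(t)$ with $\e$ small enough yields \eqref{e:2.15}. Adjusting constants using \eqref{e:1.2a} and \eqref{e:phi} completes ${\rm NDL}(\phi)$, and in particular ${\rm NLHK}(\phi)$.
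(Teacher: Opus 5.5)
Your proposal is correct and is essentially the paper's argument. Part (i) is exactly ${\rm PI}(\phi)\Rightarrow{\rm FK}(\phi)$ (Remark~\ref{R:2.9}) followed by Theorem~\ref{T:2.12}, and for (ii) the paper simply combines \eqref{e:E} with \cite[Propositions 4.14 and 4.10]{CKW2}, which is the same \cite[Section 4]{CKW2} machinery your Steps 1--4 unpack. (Minor points: the detour through Lemma~\ref{L:2.9} is unnecessary, since your steps give ${\rm NDL}(\phi)$ directly. In Step 3 only the H\"older/weak-Harnack option is available, because a full parabolic Harnack inequality would require UJS, which is not assumed. The tail bound in Step 1 is best taken from ${\rm UHK}(\phi)$ plus conservativeness, as in Claim 2 of the proof of Theorem~\ref{GT.thm10}, rather than from ${\rm J}_{\phi,\le}$ and the L\'evy system alone.)
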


\proof (i) This follows from Theorem \ref{T:2.12} and the fact that ${\rm PI} (\phi)$ implies  ${\rm FK} (\phi)$.

\medskip

(ii)  This follows from  \eqref{e:E} and \cite[Propositions 4.14 and 4.10]{CKW2}. 
  \qed

\bigskip

\remark \label{R:2.10}  \rm 
\begin{enumerate}
\item[(i)]
It is shown in \cite[Proposition 3.5]{CKW2} that ${\rm NDL}(\phi)$ implies ${\rm PI}(\phi)$.
Note that although it is assumed $\sX$ is unbounded in \cite{CKW2}, this result holds for bounded $\sX$ as well by 
the same proof. 
 This combined with Lemma \ref{L:2.9} yields that ${\rm UHK}(\phi)$ and     ${\rm NLHK}(\phi)$  implies  ${\rm PI}(\phi)$. 

\item[(ii)]  In fact, all the main results in \cite{CKW1, CKW2} hold when $\sX$ is bounded as well;  see \cite[Remark 8.3]{CC} for details.
 Thus the unboundedness 
condition of $(\sX, d)$ in Theorems \ref{T:2.12} and \ref{T:2.13} as well as in Theorem \ref{T:2.11} below  can be dropped. 
\end{enumerate}
\endremark

Combining Remark~\ref{R:2.10} with Remark~\ref{R:2.9}, Theorem  \ref{T:2.13} and Lemma \ref{L:2.9} immediately yields the following.

\begin{thm}\label{T:2.11}
  Suppose that   ${\rm J}_{\phi,\le}$ and ${\rm CSJ} (\phi)$ hold. 
 Then the following are equivalent. 
  \begin{enumerate}
  \item[\rm (i)]   ${\rm PI}(\phi) $ holds;

\item[\rm (ii)]   $(\sE, \sF)$ is conservative,   and both ${\rm UHK}(\phi)$ and  ${\rm NLHK}(\phi)$   hold;

\item[\rm (iii)]   $(\sE, \sF)$ is conservative,  and both   ${\rm UHK}(\phi)$ and  ${\rm NDL}(\phi)$ hold.
 \end{enumerate} 
\end{thm}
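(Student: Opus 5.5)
The proof is a cycle of implications, (i) $\Rightarrow$ (iii) $\Rightarrow$ (ii) $\Rightarrow$ (i). Each arrow follows from results already stated, with Remark~\ref{R:2.10}(ii) used throughout so that the unboundedness hypothesis in Theorems~\ref{T:2.12} and \ref{T:2.13} can be dropped. Throughout we keep the standing assumptions ${\rm J}_{\phi,\le}$ and ${\rm CSJ}(\phi)$, together with VD, RVD \eqref{e:1.2a} and \eqref{e:phi}.

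\emph{(i) $\Rightarrow$ (iii).} Assume ${\rm PI}(\phi)$. Together with ${\rm J}_{\phi,\le}$ and ${\rm CSJ}(\phi)$, these are exactly the hypotheses of Theorem~\ref{T:2.13}. Part (i) of that theorem gives conservativeness and ${\rm UHK}(\phi)$, and part (ii) gives ${\rm NDL}(\phi)$. (Internally, Theorem~\ref{T:2.13}(i) passes through Remark~\ref{R:2.9}, i.e.\ ${\rm PI}(\phi)\Rightarrow{\rm FK}(\phi)$, and then Theorem~\ref{T:2.12}. Part (ii) uses the exit time bound \eqref{e:E}.)

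\emph{(iii) $\Rightarrow$ (ii).} Conservativeness and ${\rm UHK}(\phi)$ carry over unchanged. For ${\rm NLHK}(\phi)$ we use the argument in the remark following Definition~\ref{S2.def10}. Fix $t>0$ and $x,y$ with $d(x,y)<\e\phi^{-1}(t)$, where $\e=\e_{\ref{e:2.15}}$, and take the ball $B=B(x,r)$ with $r=\phi^{-1}(t)/\e^2$. Two things then hold:
\begin{itemize}
\item since $\e<1$, we have $t<\phi(\e r)$, because $\e r=\phi^{-1}(t)/\e>\phi^{-1}(t)$;
\item both $x$ and $y$ lie in $B(x,\e\phi^{-1}(t))$.
\end{itemize}
Therefore
\[
p(t,x,y)\ge p^{B}(t,x,y)\ge \frac{c_{\ref{e:2.15}}}{V(x,\phi^{-1}(t))}.
\]

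\emph{(ii) $\Rightarrow$ (i).} Lemma~\ref{L:2.9} upgrades ${\rm UHK}(\phi)$ and ${\rm NLHK}(\phi)$ to ${\rm NDL}(\phi)$. Then \cite[Proposition 3.5]{CKW2}, quoted in Remark~\ref{R:2.10}(i) and valid for bounded $\sX$ as well, shows that ${\rm NDL}(\phi)$ implies ${\rm PI}(\phi)$.

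The only real point to watch is that every cited result holds without the unboundedness assumption, and that the cycle never uses circularly a consequence of the very equivalence being proved. This is handled by Remark~\ref{R:2.10}(ii). Beyond that, the argument is pure bookkeeping.
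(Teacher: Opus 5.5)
Your proof is correct and follows essentially the same route as the paper, which obtains the theorem by combining the same ingredients. Those are Theorem~\ref{T:2.13} (through Remark~\ref{R:2.9}) for (i)$\Rightarrow$(iii), the remark after Definition~\ref{S2.def10} and Lemma~\ref{L:2.9} linking (ii) and (iii), and Remark~\ref{R:2.10} (i.e.\ \cite[Proposition 3.5]{CKW2} together with dropping unboundedness) to return to (i).
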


\setcounter{equation}{0} 
\section{Main Theorem: lower bound of heat kernel}

In this section, we present our main theorem, Theorem~\ref{GT.thm10}, of this paper. It concerns with a lower bound estimate of a hear kernel associated with an MMD space via averaged jump kernel.

\thm\label{GT.thm10}
Let $(\sX, d, m, \sE, \sF)$ be a MMD. Suppose that $(\sE, \sF)$ is conservative, and ${\rm UHK}(\phi)$ and ${\rm NLHK} (\phi)$ hold. Then there exist constants $c_{\ref{GT.eq10}} > 0$, $\e_{\ref{GT.eq10}} \in (0, \e_{\ref{e:1.9}}/2)$ and $\kappa \in (0, 1)$ such that 
\begin{equation}\label{GT.eq10}
p(t, x, y) \ge c_{\ref{GT.eq10}}tJ(x, y, \e_{\ref{GT.eq10}}\phi^{-1}(t))
\end{equation}
for any $(t, x, y) \in (0, \kappa\phi (\diam (\sX))) \times \sX \times \sX$ with $d(x, y) > \e_{\ref{e:1.9}}\phi^{-1}(t)$, where
\[
J(x, y, r) 
 = \frac {1}{V(x, r)V(y, r)}\int_{B(x, r)}\int_{B(y, r)} J(u, v)m(du)m(dv)
\]
Here the constant $\kappa \in (0, 1)$ is a constant that depends only on the parameters in (VD), \eqref{e:phi} for $\phi$ and ${\rm UHK}(\phi)$.
\endthm

 \demo  First we are going to show several claims.

\smallskip

\noindent{\rm \bf Claim 1}\,\,There is a constant $c_1 \geq 1$ such that for any $t, r > 0$ all $x\in \sX$,
\begin{equation}\label{e:1.12}
\int_{B(x,r)^c} p(t, x,y)  \,m(dy)\le \frac{c_1 t}{\phi(r)}.
\end{equation}

\smallskip

\noindent Proof of Claim 1:
It suffices to show \eqref{e:1.12} for $\phi (r)>t$, as it holds trivially when $\phi (r) \leq t$.
By ${\rm UHK}(\phi)$, \eqref{e:1.2a}, \eqref{e:phi} and  \eqref{e:2.6}, 
\begin{eqnarray*}
\int_{B(x,r)^c} p(t, x,y)  \,m(dy) 
&= & \sum_{k=0}^\infty \int_{B(x, 2^{k+1}r) \setminus B(x, 2^k r)}  p(t, x,y)  \,m(dy) \\
&\leq &  \sum_{k=0}^\infty  \frac{ c_2 t}{V(x, 2^kr) \phi (2^kr)} V(x, 2^{k+1}r)  \\
&=& \frac{ c_2 t}{\phi (r)}   \sum_{k=0}^\infty    \frac{\phi (r)}{\phi (2^kr)} \frac{V(x, 2^{k+1}r)}{V(x, 2^kr)}  \\ 
&\leq &  \frac{ c_3 t}{\phi (r)} \sum_{k=0}^\infty    2^{- \beta_3 k} .
\end{eqnarray*} 
This proves the claim \eqref{e:1.12}.

\medskip

\noindent{\rm \bf Claim 2}\,\,
There exists  a constant $\kappa \in (0, 1)$   such that
\begin{equation}\label{e:1.16}
\bP_x  \left(\tau_{B(x,  \phi^{-1} (  t)/\kappa )}>   t \right)  \geq 1/2 
\end{equation} 
for any $x \in X$ and  $0 < t < \kappa\phi(\diam (\sX))$.

\smallskip

\noindent Proof of Claim 2: Since the Hunt process $X$ is conservative,
 by the strong Markov property of $X$ and \eqref{e:1.12}, for any $t > 0$ and $0<r < \diam (\sX)$ and $x\in \sX$,
\begin{eqnarray*}
&& \bP_x(\tau_{B(x,r)}\le t ) \\
&=&\bP_x(\tau_{B(x,r)}\le t, X_{2t}\in B(x,r/2)^c)+\bP_x(\tau_{B(x,r)}\le t, X_{2t}\in B(x,r/2))\\
&\le&\bP_x( X_{2t}\in B(x,r/2)^c)+ \sup_{z\notin B(x,r)^c, s\le t} \bP_z( X_{2t-s}\in B(z,r/2)^c)\\
&\le & \frac{4 c_3 t}{\phi(r/2)} \leq \frac{c_4 t}{\phi (r)}. 
 \end{eqnarray*}  
In view of \eqref{e:phi}, there is a constant $\kappa \in (0, 1)$ so that 
\begin{equation}
\phi (\phi^{-1} (t)/\kappa) \geq 2c_4 t  \quad \hbox{for every }  t>0.
\end{equation} 
 Thus  we have for any $0< t< \kappa\phi (\diam (\sX))$,
 \begin{equation} \label{e:1.14}
 \bP_x  \left(\tau_{B(x, \phi^{-1} (t)/\kappa)}>  t \right)  
 =1- \bP_x  \left(\tau_{B(x, \phi^{-1} (t)/\kappa)}\leq  t \right)
  \geq 1/2.
 \end{equation} 
This proves Claim 2. 

 \medskip

By  Lemma \ref{L:2.9},  ${\rm NDL}(\phi)$ holds; that is, there are constants $c_5 > 0$ and $\eps_0 \in (0, 1)$ so that for every $x_0\in \sX$, $r>0$, $t\in (0, \phi (\eps_0r))$ and $B = B(x_0, r)$, 
 \begin{equation}\label{e:2.15a}
 p^B(t, x, y) \geq   \frac{c_5}{V(x_0, \phi^{-1}(t))}  \quad \hbox{for } x, y \in B(x_0, \eps_0 \phi^{-1}(t)).
 \end{equation}
 By decreasing the value of $\eps_0$ if needed, by \eqref{e:phi} we may and do assume 
 $\eps_0\in (0, \e_{\ref{e:1.9}}/2)$ so that 
 \begin{equation}\label{e:2.16a}
 2\eps_0 \phi (t)\leq \e_{\ref{e:1.9}}\phi^{-1}(t/2) \quad \hbox{for every} t > 0.
 \end{equation}

\medskip

\noindent{\rm \bf Claim 3}\,\,There exist constants  $\e_1 \in (0, \e_0)$, $\lambda_0 \in (0, 1/2)$ and $c_6 > 0$ such that 
\[
p^{B(x, \e_0\phi^{-1}(t))}(s, x, z) \ge \frac{c_6}{V(x, \phi^{-1}(t))}
\]
for any $x \in \sX$, $t > 0 $,
$z \in B(x, \e_1\phi^{-1}(t))$ and $s \in [\lambda_0 t/2, \lambda_0 t]$.

\smallskip

\noindent Proof of Claim 3:\,\,
By   \eqref{e:phi}, 
  there is a constant $\lambda_0\in (0, 1/2)$ so that 
  \begin{equation}\label{e:2:18} 
 \phi^{-1}( \lambda_0 t  )\leq  
  \min\{\kappa\eps_0, \eps_0^2\}  \, \phi^{-1}( t) 
   \quad \hbox{for any } t>0.
  \end{equation}
By  \eqref{e:phi}  again, there is some $\e_* \in (0, \e_0)$ so that 
\begin{equation}\label{e:2.17}
 \e_*\phi^{-1}(t) \leq \e_0\phi^{-1}(\lambda_0t/2)  \quad \hbox{for any } t>0.
 \end{equation}
Thus by \eqref{e:2.15a}-\eqref{e:2.17} and \eqref{e:1.2a}-\eqref{e:phi}, if $\e_1 \in (0, \e_*)$, then we have 
\[
p^{B(x, \e_0\phi^{-1}(t))}(s, x, z) \ge \frac{c_5}{V(x, \phi^{-1}(s))} \ge \frac{c_5}{V(x, \phi^{-1}(\lambda_0t))} \ge \frac{c_6}{V(x, \phi^{-1}(t))}
\]
for any $s \in [\frac {\lambda_0{t}}2, \lambda_0t]$ and $z \in B(x, \e_1\phi^{-1}(t))$, proving the Claim 3.

\medskip
Let $r_j(t) = \e_j\phi^{-1}(t)$ and $B_j(z) = B(z, r_j(t))$ for $j = 0, 1$ and $z \in \sX$. Define
\[
J_2(u, v, r) = \frac 1{V(v, r)}\int_{B(v, r)}J(u, z)m(dz)
\]
for $u, v \in \sX$ and $r > 0$. Using the above established claims with \eqref{e:1.2a} and \eqref{e:phi}, for any $0 < t < \kappa\phi(\diam (\sX))$, we have that
 \begin{align*}
&\bP_x \left( X_{\lambda_0 t}\in B(y,  2 \eps_0 \phi^{-1}(t) ) \right) \\
&\geq \bP_x\big(\hbox{$X$ hits $B_0(y)$ before $\lambda_0 t$ and then stays }\\
& \hskip 0.6 truein  \hbox{within its $\eps_0 \phi^{-1}(t)$ neighborhood for at least $\lambda_0  t$ units of time} \big) \\
&\geq \bP_x(\sigma_{B_0(y)} < \lambda_0t)\inf_{z\in B(y, \eps_0  \phi^{-1}(t))}\bP_z(\tau_{B(z, \eps_0 \phi^{-1}(t))}\geq \lambda_0  t)  \\
&\geq  \frac12 \bP_x  \left(X_{(\lambda_0t) \wedge \tau_{B_0(x)}} \in  B_0(y)\right)\\
&= \frac12 \bE_x\Big[\int_0^{(\lambda_0  t) \wedge\tau_{B_0(x)}}\int_{B_0(y)}J(X_s, v)m(dv)ds\Big] \\
&\ge \frac12 \bE_x\Big[\int_0^{(\lambda_0  t) \wedge\tau_{B_0(x)}}\int_{B_1(y)}J(X_s, v)m(dv)ds\Big] \\
&= \frac 12m(B_1(y))\bE_x\Big[\int_0^{(\lambda_0  t) \wedge\tau_{B_0(x)}}J_2(X_s, y, r_1 (t))ds\Big]\\
&= \frac 12m(B_1(y))\int_0^{\lambda_0t}\int_{B_0(x)}p^{B_0(x)}(s, x, z)J_2(z, y, r_1 (t))m(dz)ds\\
&\ge \frac 12m(B_1(y))\int_{\frac{\lambda_0t}2}^{\lambda_0t}\int_{B_1(x)}p^{B_0(x)}(s, x, z)J_2(z, y, r_1 (t))m(dz)ds\\
&\ge c_7V(y, \phi^{-1}(t))tJ(x, y, r_1 (t)),
\end{align*}
where the first equality is due to \eqref{GT.eq15}. Note that by \eqref{e:2.16a},
\[
2\e_0\phi^{-1}(t) < \e_{\ref{e:1.9}}\phi^{-1}(t/2) \le \e_{\ref{e:1.9}}\phi^{-1}((1 - \lambda_0)t).
\]
For any $t \in (0, \phi (\diam (\sX))$ and $x, y \in U$ with $d(x, y) > \e_{\ref{e:1.9}} \phi^{-1}(t)$, the above together with ${\rm NLHK} (\phi)$ yields
 \begin{eqnarray*}
p(t, x, y) &\geq& \int_{B(y, 2\eps_0 \phi^{-1}(t))} p(\lambda_0  t,x,z) p((1-\lambda_0 ) t, z, y)m(dz)\\
&\geq&  \frac{c_{\ref{e:1.9}}}{V(y, \phi^{-1}((1 - \lambda_0)t))}\bP_x(X_{\lambda_0 t}\in B(y, 2 \eps_0 \phi^{-1}(t)))\\
&\geq& c_7c_{\ref{e:1.9}}\, \frac{ V(y, \phi^{-1}(t))}{V(y, \phi^{-1}((1 - \lambda_0)t))}tJ(x, y, r_1 (t)) \\
& \geq&  c_8tJ(x, y, r_1 (t)).
  \end{eqnarray*}
This completes the proof of the theorem.
 \enddemo

 The following corollary follows immediately from Theorem~\ref{T:2.11} and Theorems~\ref{GT.thm10}.
 
 \begin{cor} \label{C:2.16}
 Suppose that   ${\rm J}_{\phi,\le}$, ${\rm CSJ} (\phi)$ and ${\rm PI}(\phi) $ hold for the MMD  \hfill \break
  $(\sX, d, m, \sE, \sF)$.
 Then $(\sE, \sF)$ is conservative and has a  jointly continuous density function p(t, x, y) on
$(0, \infty) \times \sX \times \sX$ with respect to $m$ that satisfies   ${\rm UHK}(\phi)$,   ${\rm NDL}(\phi)$  as well as the lower bound off-diagonal estimate \eqref{GT.eq10}. 
 \end{cor}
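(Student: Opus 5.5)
This corollary is a direct assembly of earlier results, and I will prove it by chaining them in the following order.

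First, I apply Theorem~\ref{T:2.11}. Its standing hypotheses, ${\rm J}_{\phi,\le}$ and ${\rm CSJ}(\phi)$, are assumed, and so is condition (i), ${\rm PI}(\phi)$. The equivalence (i)$\Leftrightarrow$(iii) then says that $(\sE,\sF)$ is conservative and that ${\rm UHK}(\phi)$ and ${\rm NDL}(\phi)$ both hold. By Definition~\ref{S2.def10}(ii), ${\rm UHK}(\phi)$ already includes the existence of a jointly continuous transition density $p(t,x,y)$ on $(0,\infty)\times\sX\times\sX$ with respect to $m$.

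The bounded versus unbounded issue needs a check. Theorem~\ref{T:2.11} relies on Theorems~\ref{T:2.12} and \ref{T:2.13}, which are stated for unbounded $\sX$. Remark~\ref{R:2.10}(ii) removes this restriction, so no case split on $\diam(\sX)$ is required.

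Next, I recall that ${\rm NDL}(\phi)$ implies ${\rm NLHK}(\phi)$, by the remark following Definition~\ref{S2.def10}. Alternatively, I can cite (i)$\Leftrightarrow$(ii) of Theorem~\ref{T:2.11} directly. Either way, conservativeness, ${\rm UHK}(\phi)$ and ${\rm NLHK}(\phi)$ all hold, and these are exactly the hypotheses of Theorem~\ref{GT.thm10}. That theorem then gives the off-diagonal lower bound \eqref{GT.eq10} with constants $c_{\ref{GT.eq10}}$, $\e_{\ref{GT.eq10}}$ and $\kappa$, in the range $t<\kappa\phi(\diam(\sX))$ and $d(x,y)>\e_{\ref{e:1.9}}\phi^{-1}(t)$.

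There is no genuine obstacle here. The only point needing care is that the constant $\e_{\ref{e:1.9}}$ in \eqref{GT.eq10} must be the one attached to the particular ${\rm NLHK}(\phi)$ produced in the previous step. That is consistent, since we simply fix that constant once and carry it through.
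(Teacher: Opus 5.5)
Your proposal is correct and follows the same route as the paper, which obtains the corollary immediately from Theorem~\ref{T:2.11} and Theorem~\ref{GT.thm10}. Theorem~\ref{T:2.11} gives conservativeness, ${\rm UHK}(\phi)$ and ${\rm NDL}(\phi)$, hence also ${\rm NLHK}(\phi)$, and Theorem~\ref{GT.thm10} then gives \eqref{GT.eq10}. Your remarks on bounded $\sX$ via Remark~\ref{R:2.10}(ii), and on fixing the constant $\e_{\ref{e:1.9}}$, are consistent with the paper.
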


Below is an example, taken from \cite[Example 1.2]{CKW2}, of 
a MMD $(\sX, d, m, \sE, \sF)$ with degenerate jump kernel $J(x, y)$ for which the conditions of Corollary \ref{C:2.16} are satisfied. 
 
\begin{example}\label{E:3.3}
  Let $(\sX, d)$ be   $\R^d$ equipped with Euclidean metric, and $m$ be the Lebesgue measure on $\R^d$.
For $0 < \theta  < \pi/2$ and $v \in \R^d$ with $|v|=1$, let $C$ be the two-sided circular cone with axis in the direction of $v$,
 apex angle $2\theta$ and vertex at
the origin, that is $C=\{z\in \R^d:  |z\cdot v|/|z|> \cos \theta \}$. For $0<\alpha <2$, define $\phi (r)=r^\alpha$ and 
$$
J(x,y)=\1_C (x-y)   |x-y|^{-d-\alpha} \quad \hbox{for } x, y\in \R^d.
$$
Denote by $(\sE, \sF)$ the corresponding regular Dirichlet form defined by \eqref{GT.eq100}. 
Clearly $J_{\phi, \leq}$ holds. As illustrated in \cite[Example 1.2 (continued) on p.3797]{CKW2},  both
${\rm CSJ} (\phi)$ and ${\rm PI}(\phi) $ hold for   $(\sX, d, m, \sE, \sF)$.
Hence the conclusions of Corollary \ref{C:2.16} hold for this MMD $(\sX, d, m, \sE, \sF)$.
\end{example}

A similar example but with the cone axis dependent on the location can be found in \cite[Example 1.3]{CKW2},
for which the conditions of  Corollary \ref{C:2.16} are also satisfied.

\medskip

Under the UJS condition given below, Theorem~\ref{GT.thm10} implies the lower transition density estimate \eqref{e:1.5} as we will see in Corollary~\ref{GT.cor100}. It is however no better than \eqref{GT.eq10} in general.
\definition\label{UJS}
A jump kernel $J$ is said to satisfy {\rm UJS} if there exists $c_{\ref{GT.eq20}} > 0$ such that
\begin{equation}\label{GT.eq20}
J(x, y) \le \frac{c_{\ref{GT.eq20}}}{V(x, r)}\int_{B(x, r)} J(u, y)m(du)
\end{equation}
for any $x, y \in \sX$ and $r \in (0,  d(x, y)/2)$.
\enddefinition

\remark \rm 
The UJS condition was first introduced  \cite{BBK} in the setting of graphs, and then in \cite{CKK}
 for the general setting of metric measure spaces. It is established in \cite[Theorem 1.18]{CKW2} that, under VD and RVD,   UJS combined with ${\rm NDL}(\phi)$ is equivalent to the parabolic Harnack inequality ${\rm PHI}(\phi)$.
\endremark

\lemma\label{GT.lemma10}
Assume that {\rm UJS} and {\rm VD} are satisfied. Then there exists $c_{\ref{GT.eq30}} > 0$ such that
\begin{equation}\label{GT.eq30}
J(x, y) \le c_{\ref{GT.eq30}}J(x, y, r)
\end{equation}
for any $x, y \in \sX$ and $r \in (0,  d(x, y)/2)$.
\endlemma

\demo
Suppose that  $0<r < d(x, y)/2$. Then for any $v \in B(y, r)$,
\[
d(x, v) \ge d(x, y) - r > 2r - r = r.
\] 
Thus by UJS, 
\begin{eqnarray}\label{GT.eq300}
J(x, v) & \le & \frac{c_{\ref{GT.eq20}}}{V(x, r/2)}\int_{B(x, r/2)} J(u, v)m(du) \nonumber \\
&\le & \frac{c}{V(x, r)}\int_{B(x, r)} J(u, v)m(du),
\end{eqnarray}
where the constant $c$ depends on the constants in (VD) and $c_{\ref{GT.eq20}}$. By UJS again, it follows from \eqref{GT.eq300} that
 \begin{multline*}
J(x, y) \le \frac{c_{\ref{GT.eq20}}}{V(y, r)}\int_{B(y, r)} J(x, v)m(dv) \\
\le \frac{c_{\ref{GT.eq20}}c}{V(y, r)V(x, r)}\int_{B(y, r)}\int_{B(x, r)} J(u, v)m(du)m(dv) = c_{\ref{GT.eq20}}J(x, y, r).
\end{multline*}
\enddemo

\cor\label{GT.cor100}
Suppose that ${\rm UHK}(\phi)$, ${\rm NLHK} (\phi)$ and {\rm UJS} hold for the MMD $(\sX, d, m, \sE, \sF)$, and $(\sE, \sF)$ is conservative. Then there exist positive constants $c_{\ref{GT.eq50}}$ and $\kappa \in (0, 1)$ such that
\begin{equation}\label{GT.eq50}
p(t, x, y) \ge c_{\ref{GT.eq50}}tJ(x, y)
\end{equation}
for any $(t, x, y) \in (0, \kappa\phi (\diam (\sX))) \times \sX \times \sX$ with $d(x, y) > \e_{\ref{e:1.9}}\phi^{-1}(t)$,
\endcor

\demo
Let $r = \e_{\ref{GT.eq10}}\phi^{-1}(t)$. Since $\e_{\ref{GT.eq10}} < \e_{\ref{e:1.9}}/2$, we see that
\[
d(x, y) > \e_{\ref{e:1.9}}\phi^{-1}(t) > 2r.
\]
The desired inequality follows directly from \eqref{GT.eq10} and Lemma~\ref{GT.lemma10}.
\enddemo

As is mentioned above, the estimate \eqref{GT.eq50} is simpler but no shaper than \eqref{GT.eq10} because of \eqref{GT.eq30}. In fact, if $m(\partial{B(x, r)}) = 0$ for any $x \in \sX$ and $r > 0$, then $J(x, y, r)$ is continuous for any $(x, y, r) \in \sX \times \sX \times (0, \infty)$. Hence in the case where $J(x, y)$ is not continuous, \eqref{GT.eq10} is sharper than \eqref{GT.eq50}. Despite this, the UJS condition still has an advantage, that is, we can make the parameter $\kappa$ in Theorem~\ref{GT.thm10} as large as possible.

\cor\label{GT.cor200}
Suppose that ${\rm UHK}(\phi)$, ${\rm NLHK} (\phi)$ and {\rm UJS} hold for the MMD $(\sX, d, m, \sE, \sF)$, and $(\sE, \sF)$ is conservative. Then for any $\kappa > 0$, there exist constants $c_{\ref{GT.eq70}} > 0$ and  $\e_{\ref{GT.eq70}} \in (0, \e_{\ref{e:1.9}}/2)$ such that 
\begin{equation}\label{GT.eq70}
p(t, x, y) \ge c_{\ref{GT.eq70}}tJ(x, y, \e_{\ref{GT.eq70}}\phi^{-1}(t))
\end{equation}
for any $(t, x, y) \in (0, \kappa\phi (\diam (\sX))) \times \sX \times \sX$ with $d(x, y) > \e_{\ref{e:1.9}}\phi^{-1}(t)$
\endcor

This corollary is essentially due to the fact that the combination of ${\rm UHK}(\phi)$, ${\rm NLHK} (\phi)$ and {\rm UJS} implies the parabolic Harnack inequality. See its proof below for the details.\par
We use the following lemma in the proof of Corollary~\ref{GT.cor200}.

\lemma\label{GT.lemma20}
Let $\b = \b_{\ref{e:1.2a}, 2}$ and $c = c_{\ref{e:1.2a}}$ appearing in \eqref{e:1.2a}. Then
\[
J(x, y, r) \le c^2\Big(\frac Rr\Big)^{2\b}J(x, y, R)
\]
for any $x, y \in \sX$ and $r, R > 0$ with $0 < r < R$.
\endlemma

\demo
Using \eqref{e:1.2a}, we see that
\[
J(x, y, r) \le \frac{V(x, R)V(y, R)}{V(x, r)V(y, r)}J(x, y, R) \le c^2\Big(\frac Rr\Big)^{2\b}J(x, y, R).
\]
\enddemo

\demo[Proof of Corollary~\ref{GT.cor200}]
By Theorem~\ref{T:2.11}, NDL$(\phi)$ holds for $X$ as well under ${\rm UHK}(\phi)$ and ${\rm NLHK} (\phi)$. Thus by \cite[Theorem 1.18]{CKW2}, we have the parabolic Harnack inequality ${\rm PHI}^+(\phi)$. In particular, there are positive constants $c_{\ref{PHI}}$ and $C_{\ref{PHI}}$ such that 
\begin{equation}\label{PHI}
\sup_{ (s, x) \in Q_-}  p(s, x, y)
 \leq C_{\ref{PHI}} \,  \inf_{ (t, x)\in Q_+} p(t, x, y)
\end{equation}
for any $x_0, y \in \sX$ and $r > 0$, where
\begin{align*}
Q_- &= [c_{\ref{PHI}}\phi(r), 2c_{\ref{PHI}}\phi(r)] \times B(x_0, r)\quad\text{and}\\
Q_+ &= [3c_{\ref{PHI}}\phi(r), 4c_{\ref{PHI}} \phi(r)] \times B(x_0, r).
\end{align*}
For any $t$, choose $r > 0$ such that $(x, t) \in Q_+$. Then $(x, t/2) \in Q_-$ and \eqref{PHI} shows
\begin{equation}\label{GT.eq500}
C_{\ref{PHI}}p(t, x, y) \geq p(t/2, x, y).
\end{equation}
Now denote $\kappa \in (0, 1)$ in Theorem~\ref{GT.thm10} by $\kappa_0$. If $\kappa$ in Corollary~\ref{GT.cor200} satisfies $\kappa \le \kappa_0$, then the corollary follows immediately from Theorem~\ref{GT.thm10}. Assume that $\kappa > \kappa_0$. Let $\delta = \kappa_0/\kappa$. Using \eqref{GT.eq500} repeatedly if necessary, we see that there exists $c_1 > 0$ such that 
\[
p(t, x, y) \ge c_1p({\delta}t, x, y)
\]
for any $x, y \in \sX$ and $t > 0$ Suppose $0 < t \le \kappa\phi(\diam(\sX))$. Then ${\delta}t \le \kappa_0\phi(\diam{\sX})$. Theorem~\ref{GT.thm10} yields
\begin{equation}\label{GT.eq80}
p(t, x, y) \ge c_1p({\delta}t, x, y) \ge c_1c_{\ref{GT.eq10}}tJ(x, y, \e_{\ref{GT.eq10}}\phi^{-1}({\delta}t))
\end{equation}
Choose $\e_{\ref{GT.eq70}}$ such that $\e_{\ref{GT.eq10}}\phi^{-1}({\delta}t) \ge \e_{\ref{GT.eq70}}\phi^{-1}(t)$ for any $t > 0$. Then by Lemma~\ref{GT.lemma20}, 
\begin{eqnarray}\label{GT.eq90}
J(x, y, \e_{\ref{GT.eq10}}\phi^{-1}({\delta}t)) 
&\ge&  c^{-2}\Big(\frac{\e_{\ref{GT.eq70}}\phi^{-1}(t)}{\e_{\ref{GT.eq10}}\phi^{-1}({\delta}t)}\Big)^{2\b}J(x, y, \e_{\ref{GT.eq70}}\phi^{-1}(t))
\nonumber \\ 
&\ge & c^{-2}\Big(\frac{\e_{\ref{GT.eq70}}}{\e_{\ref{GT.eq10}}}\Big)^{2\b}J(x, y, \e_{\ref{GT.eq70}}\phi^{-1}(t))
\end{eqnarray}
Combining \eqref{GT.eq80} and \eqref{GT.eq90}, we obtain the desired estimate.
\enddemo

\medskip

In the rest of this section, we are going to consider the consistency between near-diagonal and off-diagonal lower bounds in Theorem~\ref{GT.thm10}. Namely, when $d(x, y) = \e_{\ref{e:1.9}}\phi^{-1}(t)$, the near-diagonal lower bound $1/V(x, \phi^{-1}(t))$ and the off-diagonal lower bound $tJ(x, y, \e_{\ref{GT.eq10}}\phi^{-1}(t))$ are consistent
or not. More precisely, whether their ration are uniformly bounded from below and above by some positive constants throughout the space $\sX \times \sX$ or not. Substituting $\phi^{-1}(t) = d(x, y)/\e_{\ref{e:1.9}}$ and taking \eqref{e:1.2a} and \eqref{e:phi} into account, we end up with comparison between 
\[
\frac 1{V(x, d(x, y))}\quad\text{and}\quad \phi(d(x, y))J(x, y, {\rho}d(x, y))
\]
for $x, y \in \sX$, where $\rho = \e_{\ref{GT.eq10}}/\e_{\ref{e:1.9}} \in (0, 1/2)$. Recall that $O < \e_{\ref{GT.eq10}} < \e_{\ref{e:1.9}}/2$ by Theorem~\ref{GT.thm10}.\par
Under the condition of Theorem~\ref{GT.thm10}, it is easy to see that one direction of comparison always holds. Indeed, by comparing \eqref{e:2.6} and \eqref{GT.eq10}, we have the following fact.

\prop\label{GT.prop10}
Suppose that $(\E, \F)$ is conservative and ${\rm UHK}(\phi)$ holds, then there exists $c_{\ref{GT.eq110}} > 0$ such that
\begin{equation}\label{GT.eq110}
\frac 1{V(x, d(x, y))} \ge c_{\ref{GT.eq110}}\phi(d(x, y))J(x, y, {\rho}d(x, y)).
\end{equation}
\endprop

However, the other direction may fail as we will see in Proposition~\ref{TSS.prop20}. So, the off-diagonal lower bound estimate \eqref{GT.eq10} is not always best possible so that there is room for improvement.

\setcounter{equation}{0}
\section{Example: the trace of the standard resistance form on the Sierpinski gasket}\label{S:3}\label{TSS} 

Let $p_0 = (0, 0)$, $p_1 = (1, 0)$ and $p_2 = \big(\frac 12, \frac{\sqrt{3}}2\big)$, and let $S = \{0, 1, 2\}$. For $i \in S$, define $F_i: \BbR^2 \to \BbR^2$ by
\[
F_i(z) = \frac 12(z - p_i) + p_i
\]
for $x \in \BbR^2$. Then it is well-known that there exists a unique non-empty compact set $K \subseteq \BbR^2$ such that
\[
K = F_1(K) \cup F_2(K) \cup F_3(K).
\]
See \cite{AOF} for example. The set $K$ is called the Sierpinski gasket. Let $d_*$ be the restriction of the Euclidean metric on $K$. Note that $(K, d_*)$ is connected and hence uniformly perfect. \par
Define a sequence of graphs $\{(V_m, E_m)\}_{m \ge 0}$ inductively as
\begin{align*}
V_0 &= \{p_0, p_1, p_2\},\\ 
E_0 &= \{(p_i, p_j) : \   i, j \in \{0, 1, 2\}\},\\
V_{m + 1} &= F_0(V_m) \cup F_1(V_m) \cup F_2(V_m),\\
E_{m + 1} &= \{(F_k(p), F_k(q)): \   (p, q) \in E_m, k \in \{1, 2, 3\}\}.
\end{align*}
Moreover, define $\E_m: \ell(V_m) \to [0, \infty)$ by
\[
\E_m(u) = \frac 12\Big(\frac 53\Big)^m\sum_{(p, q) \in E_m} |u(p) - u(q)|^2.
\]
Finally define $(\E, \F)$ by
\[
\F = \{  u \in C(K) :  \text{ $\E_m(u|_{V_m})$ is convergent as $m \to \infty$} \} . 
\] 
and 
\[
\E(u) = \lim_{m \to \infty} \E_m(u)
\]
for $u \in \F$. Then $(\E, \F)$ is known to be a resistance form on $K$ and called the standard resistance form on the Sierpinski gasket. See \cite[Section~2]{AOF} and/or \cite{Ki16} for the definition and the general theory of resistance forms. One can also find a concise introduction of resistance forms in the appendix of \cite{KiKTaka}. Moreover see \cite[Example~3.1.5]{AOF} for the standard resistance form on the Sierpinski gasket. Note that the resistance between two disjoint compact subsets $A$ and $B$ of $K$, $R(A, B)$, associated with $(\E, \F)$ is defined as
\[
R(A, B) = \{\E(u): \  u \in \F, u|_A \equiv 1, u|_B \equiv 0\}.
\]
In this paper, we do not distinguish a set consisting of a single point $x$, i.e $\{x\}$, and a point $x$. For example, we write $R(x, y)$ in place of $R(\{x\}, \{y\})$ for $x, y \in K$. Then $R: K \times K$ is called a resistance metric associated with the resistance form $(\E, \F)$ and it is indeed a metric on $K$. See the references on the resistance forms above. Moreover, it is known that
 
\begin{equation}\label{TSS.eq10}
|x - y|^{\tau} \asymp R(x, y) 
\end{equation}
for any $x, y \in K$, where $\tau$ is given by $\displaystyle \frac 35 = \Big(\frac12\Big)^{\tau}$, i.e.
\begin{equation}\label{e:tau}
\tau = \frac{\log 5 - \log 3}{\log 2}.
\end{equation}
See \cite[(1.6.10)]{StrBook}. 
This in particular implies that  $(K, R)$ is connected, and so $(K, R)$ is uniformly perfect. Furthermore, the resistance form $(\E, \F)$ is local. Then by \cite[Proposition~7.6]{Ki16}, $(\E, \F)$ satisfies the annulus comparable condition (ACC), that is, there exists $\e > 0$ such that
 \[
 R(x, B_R(x, r)^c) \asymp R(x, \overline{B_R(x, (1 + \e)r)} \cap B_R(x, r)^c) 
 \]
 for any $x \in K$ and $r > 0$ with $B_R(x, r) \neq K$, where $B_R(x, r) = \{y| y \in K, R(x, y) < r\}$.
Moreover, by \cite[Theorem~7.12]{Ki16}, we have (RES), which is
\begin{equation}\label{TSS.eq20}
R(x, B_R(x, r)^c) \asymp r \tag{RES}
\end{equation}
for any $x \in K$ and $r \in (0, 1]$. \par
Let $\mu$ be the normalized Hausdorff measure of $(K, d_*)$. Then $(\E, \F)$ is known to be a regular Dirichlet form on $L^2(K, \mu)$. See \cite[Theorem~9.4]{Ki16} for details. The diffusion process associated with the Dirichlet form $(\E, \F)$ coincides with the Brownian motion on the Sierpinski gasket constructed by Kusuoka~\cite{Kus1}, Goldstein~\cite{Go} and Barlow-Perkins~\cite{BP}.  For the case of the infinite Sierpinski gasket, a proof of the corresponding fact is given in \cite{Ki25}. Modifying its arguments, one can identify  the Brownian motion with the diffusion process derived from the Dirichlet form $(\E, \F)$ for the current case, i.e. the (compact) Sierpinski gasket.
 
 \medskip

Set $I = [0, 1] \times \{0\} \subseteq \BbR^2$, which is the line segment $\overline{p_0p_1}$. For simplicity, we identity $I$ with the unit interval in the natural way. We consider the trace   $(\E|_I, \F|_I)$ of $(\E, \F)$ on $I$, whose definitions are
\[
\F|_I = \{v :  \text{ there exists $u \in \F$ such that $u|_I = v$}\}
\]
and
\[
\E|_I(v, v) = \min\{\E(u, u): \   u \in \F \hbox{ with }  u|_I = v\}
\]
for $v \in \F|_I$. By \cite[Theorem~8.4]{Ki16}, $(\E|_I, \F|_I)$ is a resistance form and the associated resistance metric is the restriction of the resistance metric $R$ associated with $(\E, \F)$ to $I$. Note that this resistance form $(\E|_I, \F|_I)$ is no longer local. Using \cite[Theorem~8.6 and Corollary 8.7]{Ki16}, we see that $(\E|_I, \F|_I)$ satisfies (ACC) and \eqref{TSS.eq20}. Let $\nu$ be the restriction of the $1$-dimensional Lesbegue measure onto $I$. Then $(\E|_I, \F|_I)$ is a regular Dirichlet form on $L^2(I, \nu)$. Let $(\{X_t\}_{t > 0}, \{P_x\}_{x \in I})$ be the associated Hunt process, which is the trace process of the Brownian motion on the Sierpinski gasket $K$ associated with the strongly local regular Dirichet form $(\sE, \sF)$ on $L^2(K, \mu)$. By \cite[Theorem~10.4]{Ki16}, this Hunt process possesses jointly continuous transition density $p^I_{\nu}(t, x, y)$, i,e,
\[
\bE_x [  u(X_t) ] = \int_I p^I_{\nu}(t, x, y)u(y)\nu(dy)
\]
for any bounded non-negative measurable function $u$ on $I$ and $x \in I$. By \cite[Theorems~15.6 and 15.13]{Ki16}, we have the following estimates of $p^I_{\nu}$ including ${\rm NLHK} (\phi)$. Let
\begin{equation}\label{e:alpha}
\alpha:= \tau +1 = \frac{\log (10/3)  }{\log 2} \in (1, 2).
\end{equation}
 
\thm\label{T:3.1}
There exist $c_{\ref{TSS.eq30}}, c_{\ref{TSS.eq40}}$ and $\e_{\ref{TSS.eq40}} > 0$ such that
\begin{equation}\label{TSS.eq30}
p^I_{\nu}(t, x, x) \le c_{\ref{TSS.eq30}}t^{-1/\alpha}
\end{equation}
and 
\begin{equation}\label{TSS.eq40}
 p^I_{\nu}(t, x, y) \geq c_{\ref{TSS.eq40}}t^{-1/\alpha} 
\end{equation}
for any $t\in (0, 1]$, $x, y \in I$ with $|x - y| \leq \e_{\ref{TSS.eq40}}t^{1/\alpha}$. In particular, we have
\begin{equation}\label{TSS.eq50}
p^I_{\nu}(t, x, x) \asymp t^{-1/\alpha}
\end{equation}
 for any $t \in (0, 1]$ and $x \in I$. Moreover, we have
\begin{equation}\label{TSS.eq60}
\bE_x [ \tau_{B_{*}(x, r)}] \asymp r^{\alpha}
\end{equation}
for any $r \in (0, 1]$ and $x \in I$, where $B_*(x, r) = \{y|\, y \in I, |x - y| < r\}$.
\endthm

To apply Theorem~\ref{GT.thm10} in the previous section to derive off-diagonal lower bound estimates of the transition density,  
we need to investigate the jump kernel $J_*(x, y)$ of $(\E|_I, \F|_I)$. In fact, the exact form of $J_*(x, y)$ was recently derived in \cite{KiKTaka}. To introduce the exact expression, we need several preparations. Let 
\[
\SS = \{0, 1\}^{\BbN} = \{i_1i_2 \cdots: \  i_j \in \{0, 1\}\,\, \text{for any $j \in \BbN$}\}.
\]
The set $\SS$   can be identified with the standard 
Cantor set. For $\omega = \omega_1\omega_2\ldots \in \SS$ and $\xi = \xi_1\xi_2\ldots \in \SS$, define $n_*(\omega, \xi)$ and $d_{\SS}(\omega, \xi)$ as
\[
n_*(\omega, \xi) = \min\{j| \omega_j \neq \xi_j\} - 1
\]
and 
\[
d_{\SS}(\omega, \xi) = \begin{cases}
2^{-n_*(\omega, \xi)} &\text{if $\omega \neq \xi$,}\\
0 &\text{if $\omega = \xi$.}
\end{cases}
\]
Then $d_{\SS}$ is an ultrametric on $\SS$, where the prefix ``ultra'' means that a stronger version of the triangle inequality
\[
d_{\SS}(\omega, \xi) \le \max\{d_{\SS}(\omega, \rho), d_{\SS}(\rho, \xi)\}
\]
holds. Define a map $\pi: \SS \to I$ as
\[
\pi(i_1i_2\ldots) = \sum_{j \ge 1} \frac{i_j}{2^j}.
\]
The sequence $\pi^{-1}(x)$ is the binary expression of $x \in I$. In fact, $\pi$ is injective except at the countable set
  consisting of sequences that eventually having all 0's or all 1's:  
  \[
\{i_1\cdots{i_n}0111\cdots, \  i_1\cdots{i_n}1000\cdots  : \  n \ge 0, i_1, \ldots, i_n \in \{0, 1\}\}.
\]
Note that
\begin{equation}\label{TSS.eq100}
|\pi(\omega) - \pi(\xi)| \le d_{\SS}(\omega, \xi)
\end{equation}
for any $\omega, \xi \in \SS$.

\definition\label{TSS.def100}
Define
\[
J_*(x, y) = \frac{35}{16}\Big(\frac{14}{17}\Big(\frac {20}3\Big)^{n_*(\pi^{-1}(x),\pi^{-1}(y))} + \frac{3}{17}\Big)
\]
for any $x, y \in I$. To be precise, if either $\pi^{-1}(x)$ or $\pi^{-1}(y)$ contains more than one point, we set
\[
n_*(\pi^{-1}(x), \pi^{-1}(y)) = \max\{n_*(\omega, \xi) : \   \omega \in \pi^{-1}(x), \xi \in \pi^{-1}(y)\}.
\]
\enddefinition

\medskip

\thm{\rm(\cite[Corollary~6.2]{KiKTaka})}\label{TSS.thm10}
\[
\F|_I = \bigg\{u: u \in C(I, d_*), \int_I J_*(x, y)(u(x) - u(y))^2\nu(dx)\nu(dy) < \infty\bigg\},
\]
where $d_*$ is the Euclidean metric on $I$, and
\[
\E|_I(u, v) = \int_I J_*(x, y)(u(x) - u(y))(v(x) - v(y))\nu(dx)\nu(dy)
\]
for any $u, v \in \F_I$. 
\endthm

Using $d_{\SS}$, we have
\begin{equation}\label{e:3.27} 
J_*(x, y) = \frac{35}{16}\Big(\frac{14}{17}\frac {1}{d_{\SS}(\pi^{-1}(x), \pi^{-1}(y))^{1+\alpha}} + \frac{3}{17}\Big)
\end{equation} 
 Hence by \eqref{TSS.eq100}, there exists $c_{\ref{e:3.33}} > 0$ such that
\begin{equation}\label{e:3.33}
J_*(x, y) \le \frac{c_{\ref{e:3.33}}}{|x - y|^{\a + 1}}
\end{equation}
for any $x, y \in I$. 

\thm\label{TSS.thm20}
There exists $c_{\ref{TSS.eq70}} > 0$ such that 
\begin{equation}\label{TSS.eq70}
p^I_{\nu}(t, x, y) \le c_{\ref{TSS.eq70}}\min\bigg\{t^{-1/\alpha }, \, \frac{t}{|x - y|^{1+ \alpha}}\bigg\}
\end{equation}
for any $x, y \in I$ and $t \in (0, 1]$.
\endthm

\def\wB{\widetilde{B}}
\def\wF{\widetilde{\mathcal{F}}}
\def\wl{\widetilde{\lambda}}

\demo
The inequality \eqref{e:3.33} shows that $J_{\phi, \le}$ holds with $\phi (r) :=r^\alpha$. Since $\a < 2$ by \eqref{e:alpha}, ${\rm CSJ} (\phi)$ holds by  Remark~\ref{R:2.11}.\par
Next we show ${\rm FK}(\phi)$ by comparing $J_*$ with $J^{(\infty)}$, which is the counterpart of $J_*$ in the case of the infinite Sierpinski gasket and will appear in Section~\ref{EIC}. By \eqref{IFE.eq200}, 
\begin{equation}\label{TSS.eq300}
J_*(x, y) = J^{(\infty)}(x, y) + \frac{35}{16}\cdot\frac{3}{17}.
\end{equation}
for any $(x, y) \in I^2$ with $x \neq y$. Define
\[
B(x, r) = \{y| |x - y| < r, y \in I\} \quad\text{and}\quad \wB(x, r) = \{y| |x - y| < r. y \in [0, \infty)\}
\]
for $x \in I$. Moreover, we denote $\F_D$ and $\lambda_1(D)$ in Definition~\ref{GT.def10} for $(\E_{\BbR_+}^{(\infty)}, \F_{\BbR_+}^{(\infty)})$ by $\wF_D$ and $\wl(D)_1$ respectively, while $\F_D$ and $\lambda_1(D)$ represents those for $(\E|_I. \F|_I)$. As is mentioned in Section~\ref{EIC}, it is shown in \cite{Ki25} that $\rm UHK(\phi)$ holds for $(\E_{\BbR_+}^{(\infty)}, \F_{\BbR_+}^{(\infty)})$. Hence Theorem~\ref{T:2.12} implies ${\rm FK}(\phi)$ for $(\E_{\BbR_+}^{(\infty)}, \F_{\BbR_+}^{(\infty)})$. More precisely, there exist $c > 0$ and $\nu > 0$ such that if $D$ is an open subset of $\wB(x, r)$ for $x \in [0, \infty)$ and $r > 0$, then
\begin{equation}\label{TSS.eq310}
\wl_1(D) \ge \frac{c}{\phi(r)}\Big(\frac{V(x, r)}{\nu(D)}\Big)^{\theta}.
\end{equation}
Let $\sigma = \frac 12$, $r \in (0, \sigma)$, and let $D$ be an open subset of $B(x, r)$. Suppose $1 \notin B(x, r)$. Then $D$ is an open subset of $[0, \infty)$ as well, and $\F_D = \wF_D$.  Moreover, by \eqref{TSS.eq300}, we see that $\lambda_1(D) \ge \wl_1(D)$. Therefore, by \eqref{TSS.eq300}, 
\begin{equation}\label{TSS.eq320}
\lambda_1(D) \ge \frac{c}{\phi(r)}\Big(\frac{V(x, r)}{\nu(D)}\Big)^{\theta}.
\end{equation}
Next suppose $1 \in B(x, r)$. Define $\omega(x) = 1 - x$ for $x \in [0, 1]$. Then $\omega(B(x, r)) = B(\omega(x), r)$. Since $r \in (0, \sigma)$, it follows that $1 \notin B(\omega(x), r)$. Furthermore $\lambda_1(D) = \lambda_1(\omega(D))$, $V(\omega(x), r) = V(x, r)$ and $\nu(D) = \nu(\omega(D)$. Hence \eqref{TSS.eq320} holds for this case as well. Thus we have shown ${\rm FK(\phi)}$ for $(\E|_I, \F|_I)$. \par
Now by Theorem~\ref{T:2.12} and Remark~\ref{R:2.10}-(ii), we have $\rm UHK(\phi)$ for $(\E|_I, \F|_I)$.
\enddemo

Since $J_* (x, y)$ is not comparable with $1/|x - y|^{1+\alpha}$, a comparable lower bound estimate does not hold for $p^I_{\nu}(t, x, y)$. Nevertheless, applying Theorem~\ref{GT.thm10}, we are going to show a lower off-diagonal estimate in Theorem~\ref{TSS.thm40}. To start with, for $a\in \bR$, define $\vp_a: \BbR^2 \to \BbR^2$ by
\[
\vp_a(x, y) = \left(\tfrac 12(x - a) + a, \,  \tfrac 12(y - a) + a\right).
\]
Set 
\[
W_m = \{0, 1\}^m = \{i_1\ldots{i_m} : \  i_j \in \{0, 1\}\,\,\text{for any $i = 1, \ldots, m$}\}
\]
for $m \ge 1$, $W_0 = \{\emptyset\}$, $W_* = \cup_{m \ge 0} W_m$, $\vp_{\emptyset} = \text{the identity}$ and
\[
\vp_{i_1\ldots{i_m}} = \vp_{i_1}{\circ}\cdots{\circ}\vp_{i_m}
\]
for $i_1\ldots{i_m} \in W_*$. Moreover, define $|w|$ for $w \in W_*$ by $|w| = m$ if $w \in W_m$.\par
Under these preparations, we have a division of $\sd{I^2}{\!\rm diag}$ according to values of $J_*(x, y)$ shown in Figure~\ref{Fig1}.

\begin{figure}
\centering
\includegraphics[width=\linewidth]{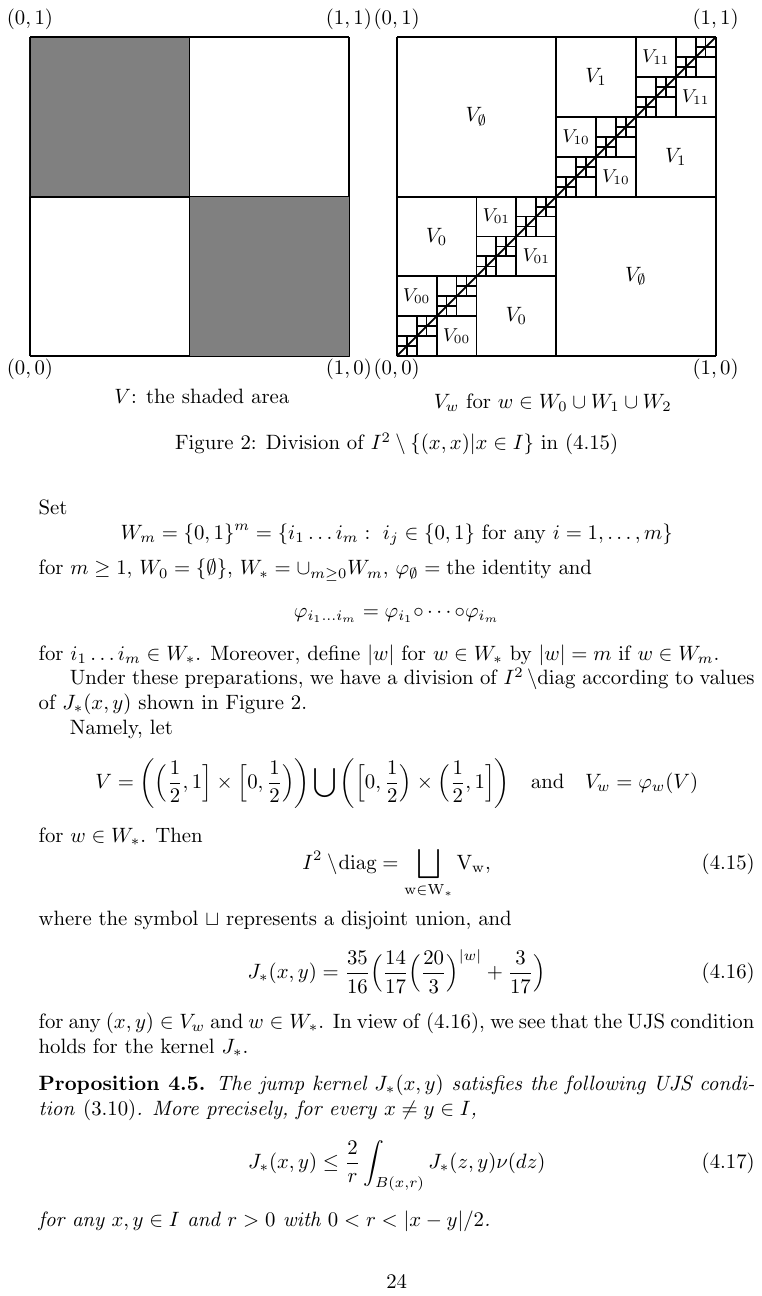}
\caption{Division of $\sd{I^2}{\{(x, x)|x \in I\}}$ in \eqref{TSS.eq90}}\label{Fig1}
\end{figure}

Namely, let
\[
V = \bigg(\Big(\frac 12, 1\Big] \times \Big[0, \frac 12\Big)\bigg) \bigcup \bigg(\Big[0, \frac 12\Big) \times \Big(\frac 12, 1\Big]\bigg)
\quad \text{and} \quad V_w = \vp_w(V)
\]
for $w \in W_*$. Then
\begin{equation}\label{TSS.eq90} 
\sd{I^2}{\!\rm diag}
= \bigsqcup_{w \in W_*} V_w,
\end{equation}
where the symbol $\sqcup$ represents a disjoint union, and
\begin{equation}\label{TSS.eq200}
J_*(x, y) = \frac{35}{16}\Big(\frac{14}{17}\Big(\frac {20}3\Big)^{|w|} + \frac{3}{17}\Big)
\end{equation}
for any $(x, y) \in V_w$ and $w \in W_*$. In view of \eqref{TSS.eq200}, we see that the UJS condition holds for the kernel $J_*$.

\prop\label{TSS.prop10}
The jump kernel $J_*(x, y)$ satisfies the following UJS condition \eqref{GT.eq20}. More precisely,
for every $x\neq y \in I$, 
\begin{equation}\label{e:3.14} 
J_*(x, y) \leq \frac 2{r} \int_{B(x, r)} J_* (z, y)\nu(dz)
\end{equation}
for any $x, y \in I$ and $r > 0$ with $0 < r < |x - y|/2$.
\endprop

Note that $r \le \mu(B(x, r)) \le 2r$ for any $x \in I$ and $r \in (0, 1]$.

\demo
Assume that $(x, y) \in V_w$ for some $w \in W_*$. Then
\[
V_w = \big((a, a + L] \times [a - L, a)\big) \cup \big([a - L, a) \times (a, a + L]\big),
\]
where $a = \vp_w\big((\frac 12, \frac 12)\big)$ and $L = 2^{-|w| - 1}$. Note that $|x - y| < 2L$. Let $I_- = \{(z, y)| z \in (x - r/2)\}$ and $I_+ = \{(z, y)| z \in (x, x + r/2)\}$. If $r < |x - y|/2$, then $r < L$ and hence either $I_+ \subseteq V_w$ or $I_+ \subseteq V_w$. Say $I_- \subseteq V_w$. Since $J_*(x, y) = J(z, y)$ for any $(z, y) \in I_-$, it follows that
\[
\frac r2J_*(x, y) = \int_{I_-} J(z, y)\nu(dz) \le \int_{B(x, r)} J_*(z, y)\nu(dz).
\]
Thus we verify \eqref{e:3.14}. The argument is entirely the same in the case $I_+ \subseteq V_w$.
\enddemo

Now that ${\rm UHK}(\phi)$, ${\rm NLHK} (\phi)$ and {\rm UJS} have obtained, Corollary~\ref{GT.cor200} implies the following lower estimate of $p^I_{\nu}(t, x, y)$.

\thm\label{TSS.thm40}
There exists are positive constants $c_{\ref{HKE}} > 0$ and $\e_{\ref{HKE}} \in (0, \e_{\ref{TSS.eq40}}/2)$ such that
\begin{equation}\label{HKE}
p^I_{\nu}(t, x, y) \ge 
\begin{cases}
c_{\ref{TSS.eq40}}t^{-1/\alpha}\quad&\text{if $|x - y| \le \e_{\ref{TSS.eq40}}t^{1/\alpha}$},\\
c_{\ref{HKE}}tJ_*(x, y, \e_{\ref{HKE}}t^{1/\a})\quad&\text{otherwise},
\end{cases}
\end{equation}
for any $x, y \in I$ and $t \in (0, 1]$, where
\[
J_*(x, y, r) = \frac 1{\nu(B(x, r))\nu(B(y, r))}\int_{B(x, r)}\int_{B(y, r)} J_*(u, v)\nu(du)\nu(dv).
\]
\endthm

Due to UJS, we may replace $J_*(x, y, \e_{\ref{HKE}}t^{1/\a})$ in \eqref{HKE} with $J_*(x, y)$ by Corollary~\ref{GT.cor100}, However, in this case, this does not make the estimate sharper because the former is continuous with respect to $x$ and $y$ but the latter is not. \par
Finally, we consider the consistency between near-diagonal and off-diagonal estimates in \eqref{HKE}. This problem has been raised at the end to the last section.  In the current example, the consistency fails because of the next proposition.

\prop\label{TSS.prop20}
For any $\rho \in (0, \frac 12)$, 
\[
\inf_{(x, y) \in \sd{I^2}{\!\rm diag}} V(x, |x - y|)\phi(|x - y|)J_*(x, y, \rho{|x - y|}) = 0
\]
\endprop
\demo
Let $x \in (\frac 12, \frac 34)$ and $y = 1 - x$. Then $|x - y| = 2x - 1$. Since $\rho \in (0, \frac 12)$, we see that $B(x, \rho|x - y|) \times B(y, \rho|x - y|) \subseteq V = V_{\phi}$. Therefore, $J_*(u, v) = J_*(x, y) = \frac{35}{16}$ for any $(u, v) \in B(x, \rho|x - y|) \times B(y, \rho|x - y|)$. Thus
\[
V(x, |x - y|)\phi(|x - y|)J_*(x, y, \rho|x - y|) = \frac {35}{16}\times 2|x - y|^{\a + 1} = \frac{35}8|2x - 1|^{\a + 1}.
\]
Taking $x \downarrow \frac 12$, we see the desired result.
\enddemo

 \begin{remark}\label{R:4.8} \rm   
 This remark is concerned with a closely related but different trace process than $X$ studied in this section; 
 namely, the trace of the reflected process of the part process
 of  the Brownian motion on the planar Sierpinski gasket $K$ killed upon hitting the bottom line segment $I$. 
 
 \medskip
 
 Let $D=K\setminus I$, the planar Sierpinski gasket with bottom line segment removed. Recall that $d_*$ is the Euclidean distance restricted to $K$.
Denote by $\rho_D$  the geodesic distance in $D$, that is,  
\[\rho_D(x,y)=\inf\{\operatorname{length}(\gamma):\,\gamma\hbox{ is a rectifiable path in $D$ connecting }x,y\}\hbox{ for }x,y\in D,\]
where $\operatorname{length}(\gamma)$ is the length of a continuous rectifiable curve $\gamma$ in $\R^2$ metered with Euclidean distance. 
Denote by $(D^*,\rho_D)$ the completion of $(D, \rho_D)$, and let $\partial D^*=D^*\setminus D$.
 We can identify $\partial D^*$ with the ``Cantor set" $\{1,2\}^{\mathbb N}$ (see \cite[Section 9.1]{CC}, which 
 is the same as the standard Cantor set in $[0, 1]$ except at the countably many points of the form 
 $$
 \left\{\sum_{k=1}^{n-1} \frac{a_k}{ 3^k} + \frac{1}{3^n}: \ n\geq 1,
     a_k\in \{ 0, 1\}  \hbox{ for } 0\leq k\leq n \right\}.
 $$
 each of which splits into the left and right points. Thus $\partial D^*$ can be identified with the unit interval $I$ except at the countably many dyadic points
 $\sN$ which has zero Lebesgue measure. Using the induced metric $\rho_D$ on $\partial D^*$ and the above identification of $\partial D^*$, 
 we can express the bounds of the jump kernel $J_*$ of Definition \ref{TSS.def100} in a more compact form:
 \begin{equation}\label{e:4.19} 
 J_*(x, y) \asymp  \frac{1}{ \rho_D(x,y)^{ \log(20/3) /\log2}}
 \quad \hbox{for } x, y\in I\setminus \sN
 \end{equation}

 Let $X^D$ be the part process of the Brownian motion $X$  on the Sierpinski gasket $K$ killed upon hitting its bottom line $I$.
 We point out that the trace of $X$ on $I$ is different from the trace of the reflected Brownian motion $\bar X$ of $X^D$ on $\partial D^*$. 
 The latter has been studied in \cite[Section 9.1]{CC}, where the jump kernel is shown to have the same estimates as in \eqref{e:4.19}
 and its two-sided transition estimates are given. For the trace process of $\bar X$ on $\partial D^*$, each point in $\sN$ is split into two points:
 the left and right point and the distance between them is positive under the metric $\rho_D$. For the trace process of $X$ on $I$, there is 
 no splitting of the points in $\sN$.  \qed
 \end{remark}
 
\setcounter{equation}{0}
\section{Example: SG trace continued}\label{S:4}
In this section, we are going to show a way to modify $J_*(x, y)$ so that we have an equivalent resistance form on the interval as $\E_I$.\par

\thm\label{T:4.1}
Set
$$
D_s = \bigg(\Big[\frac 12, 1 - \frac s2\Big) \times \Big(\frac s2, \frac 12\Big]\bigg) \bigcup \bigg(\Big(\frac s2, \frac 12\Big] \times \Big[\frac 12, 1 - \frac s2\Big)\bigg).
$$
for $s \in (0, 1)$ and set
\begin{equation}\label{e:Ns}
N_s = \bigcup_{w \in W_*} \vp_w(D_s).
\end{equation}
Assume that $J(x, y)$ satisfy $J(x, y) = J(y, x)$ for any $(x, y) \in [0, 1]^2$,
\begin{equation}\label{EF.asu10}
0 \le J(x, y) \le J_*(x, y)
\end{equation}
if $(x, y) \in N_s$ and
\begin{equation}\label{EF.asu20}
J(x, y) = J_*(x, y)
\end{equation}
otherwise. Define
\[
\F_J = \bigg\{u: u \in L^2(I, \nu), \int_{I^2} (u(x) - u(y))^2J(x, y)\nu(dx)\nu(dy) < +\infty\bigg\}
\]
and
\[
\E_J(u, v) = \int_{I^2} (u(x) - u(y))(v(x) - v(y))J(x, y)\nu(dx)\nu(dy)
\]
for $u, v \in \F_J$. Then $\F_J = \F|_I$ and there exist $c_{\ref{EF.eq10}.1}, c_{\ref{EF.eq10}.2} > 0$ such that
\begin{equation}\label{EF.eq10}
c_{\ref{EF.eq10}.1}\E|_I(u, u) \le \E_J(u, u) \le c_{\ref{EF.eq10}.2}\E|_I(u, u).
\end{equation}
for any $u \in \F|_I$.
\endthm

A proof of this theorem will be given at the end of this section. Meanwhile, we discuss the consequences from the above theorem. By \eqref{EF.eq10},  $(\E_J, \F_J)$ is a resistance form and 
\begin{equation}\label{EF.eq15}
(c_{\ref{EF.eq10}.2})^{-1}R(x, y) \le R_J(x, y) \le (c_{\ref{EF.eq10}.1})^{-1}R(x, y)
\end{equation}
for any $x, y \in I$, where $R_J(x, y)$ is the resistance metric associated with $(\E_J, \F_J)$. Hence using \cite[Theorems~15.6 and 15.13]{Ki16}, we see that the counterpart of \eqref{TSS.eq30}-\eqref{TSS.eq50} holds for the process associated with the Dirichlet from $(\E_J, \F_J)$ on $L^2(I, \nu)$. Moreover, since
\[
0 \le J(x, y) \le J_*(x, y) \le \frac{c_*}{|x - y|^{1 + \alpha}}
\]
for any $x, y \in I$, we have the counterpart of \eqref{TSS.eq70} as well. As we have ${\rm UHK}(\phi)$ and ${\rm NLHK} (\phi)$ with $\phi(r) = r^{\a}$, Theorem~\ref{GT.thm10} immediately yields the following lower off-diagonal estimate of $p_J(t, x, y)$.

\thm\label{EF.thm10}
Let $p_J(t, x, y)$ be the jointly continuous transition density associated with the Dirichlet form $(\E_J, \F_J)$ on $L^2(I, \nu)$. There exists positive constants $t_0$, $c_{\ref{EF.eq20}.1}, c_{\ref{EF.eq20}.1}$, $\e_{\ref{EF.eq20}.1}$ and $\e_{\ref{EF.eq20}.2} \in (0, \frac 12\e_{\ref{EF.eq20}.1})$ such that
\begin{equation}\label{EF.eq20}
p_J(t, x, y) \ge \begin{cases}
c_{\ref{EF.eq20}.1}t^{-1/\alpha}\quad&\text{if $|x - y| \le \e_{\ref{EF.eq20}.1}t^{1/\alpha}$},\\
c_{\ref{EF.eq20}.1}tJ(x, y, \e_{\ref{EF.eq20}.2}t^{1/\a})\quad&\text{otherwise},
\end{cases}
\end{equation}
for any $(x, y) \in \sd{I^2}{\!{\rm diag}}$ and $t \in (0, t_0]$.
\endthm

In fact, exactly the same arguments as in the proof of Proposition~\ref{TSS.prop10} shows the following partial UJS property.

\prop\label{EF.prop10}
\begin{equation}\label{EF.eq30} 
J(x, y) \leq \frac 2{r} \int_{B(x, r)} J (z, y) \nu(dz)
\end{equation}
for any $(x, y) \in \sd{I^2}{N_s}$ and $r > 0$ with $0 < r < |x - y|/2$, where $N_s$ is the set defined in \eqref{e:Ns}.
\endprop

By This proposition, it is possible to replace $J(x, y, \e_{\ref{EF.eq20}.2}t^{1/\a})$ in the off-diagonal part of \eqref{EF.eq20} by $J(x, y)$ if $(x, y) \in \sd{I^2}{N_s}$.\par
The UJS property for whole the space depends on the nature of $J$ on $N_s$. For example, if $J(x, y) \equiv 0$ on $N_s$, then \eqref{EF.eq30} holds for any $(x, y) \in I^2$ with $x \neq y$ so that UJS holds. On the other hand, the following proposition gives an example where UJS on the whole space fails.

\prop\label{EF.prop20}
Let $h = \frac 12, a_n = h + h^{n + 2}$, $b_n = h - h^{n + 2}$ and $\rho_n = h^{n^2 + 4}$ for $n \ge 1$.  Define $J: \sd{I^2}{\{(x, x)| x \in I\}} \to [0, \infty)$ by 
\[
J = \1_{\sd{I^2}{N_h}}J_* + \sum_{n \ge 1} \big(\1_{B(a_n, \rho_n) \times B(b_n, \rho_n)} + \1_{B(b_n, \rho_n) \times B(a_n, \rho_n)}\big).
\]
Then $J$ satisfies the assumptions \eqref{EF.asu10} and \eqref{EF.asu20} and
\[
\inf\Big\{\frac 1{J(x, y)r}\int_{B(x, r)} J(z, y)\nu(z) :  \  (x, y) \in I^2, x \neq y, 0 < r <  |x - y|/2\Big\} = 0.
\]
In particular, $\rm UJS(\phi)$ does not hold.
\endprop

\demo
Let $r_n = h^{n + 4}$. Then $B(a_n, r_n) \cap B(a_m, r_m) = \emptyset$ and $B(b_n, r_n) \cap B(b_m, r_m) = \emptyset$ if $n \neq m$. Moreover, 
\[
\big(B(a_n, r_n) \times B(b_n, r_n)\big) \cup \big(B(b_n, r_n) \times B(a_n, r_n)\big) \subseteq D_h \subseteq N_h.
\]
for any $n \ge 1$. Hence it follows that $J \equiv 0$ on $\sd{N_h}{D_h}$ and 
\[
J(x, y) \le 1 \le \frac{35}{16} \equiv J_*|_{D_h}
\]
for any $(x, y) \in D_h$. Thus we see \eqref{EF.asu10} and \eqref{EF.asu20}. Note that $J(a_n, b_n) = 1$. Hence
\[
\frac 1{J(a_n, b_n)r_n}\int_{B(a_n, r_n)} J(z, b_n) = 2\frac{h^{n^2 + 4}}{h^{n + 4}} = 2h^{n^2 - n}.
\]
Letting $n \to \infty$, we see the desired statement.
\enddemo

\begin{figure}
\centering
\includegraphics[width=\linewidth]{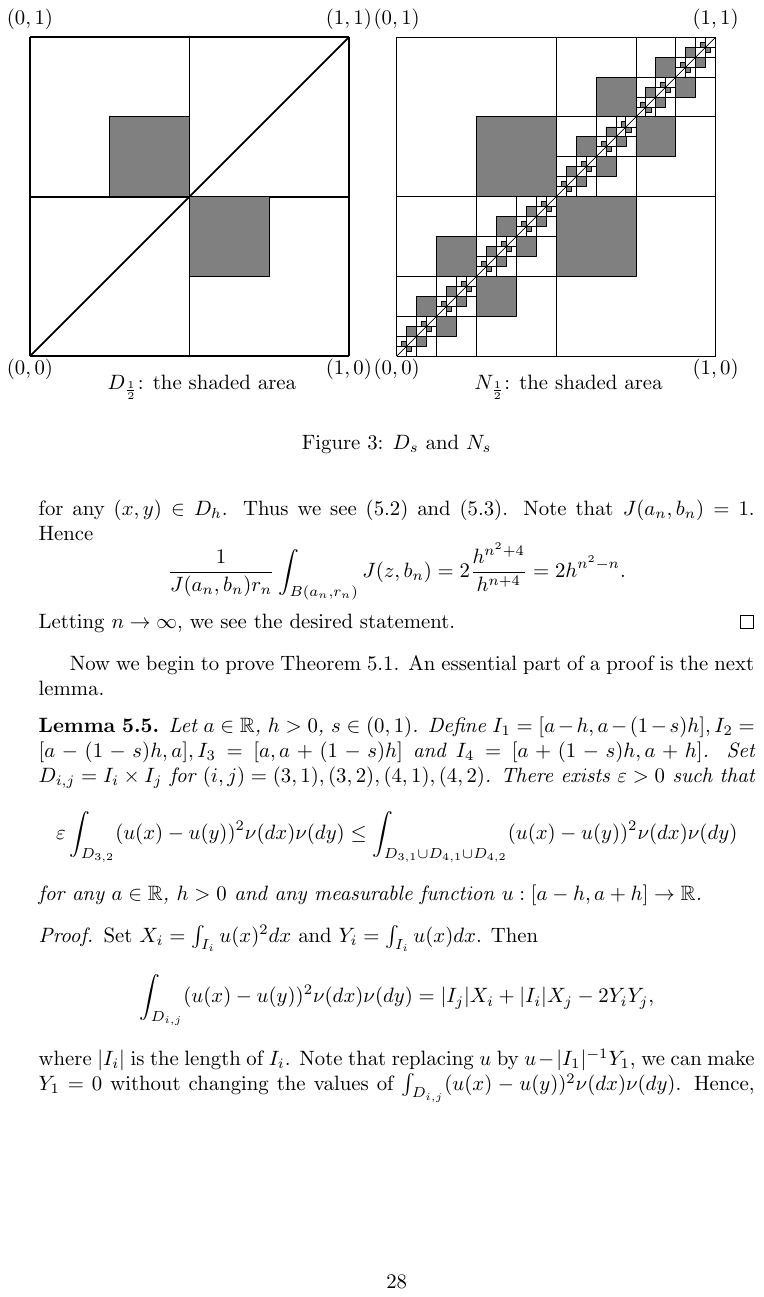}
\caption{$D_{s}$ and $N_{s}$} 
\label{Fig3}
\end{figure}

Now we begin to prove Theorem~\ref{T:4.1}. An essential part of a proof is the next lemma.

\lemma\label{EF.lemma10}
Let $a \in \BbR$, $h > 0$, $s \in (0, 1)$. Define $I_1 = [a - h, a - (1 - s)h], I_2 = [a - (1 - s)h, a], I_3 = [a, a + (1 - s)h]$ and $I_4 = [a + (1 - s)h, a + h]$. Set $D_{i, j} = I_i \times I_j$ for $(i, j) = (3, 1), (3, 2), (4, 1), (4, 2)$. There exists $\e > 0$ such that 
\[
\e\int_{D_{3, 2}} (u(x) - u(y))^2\nu(dx)\nu(dy)  \le \int_{D_{3, 1} \cup D_{4, 1} \cup D_{4, 2}} (u(x) - u(y))^2\nu(dx)\nu(dy)
\]
for any $a \in \BbR$, $h > 0$ and any measurable function $u: [a - h, a + h] \to \BbR$.
\endlemma

\demo
Set $X_i = \int_{I_i} u(x)^2dx$ and $Y_i = \int_{I_i} u(x)dx$. Then
\[
\int_{D_{i, j}} (u(x) - u(y))^2\nu(dx)\nu(dy) = |I_j|X_i + |I_i|X_j - 2Y_iY_j,
\]
where $|I_i|$ is the length of $I_i$. Note that replacing $u$ by $u - |I_1|^{-1}Y_1$, we can make $Y_1 = 0$ without changing the values of $\int_{D_{i, j}}(u(x) - u(y))^2\nu(dx)\nu(dy)$. Hence, letting $\e$ small enough, we see that
\begin{multline}\label{EF.eq50}
\int_{D_{3, 1} \cup D_{4, 1} \cup D_{4, 2}} (u(x) - u(y))^2\nu(dx)\nu(dy) - \e\int_{D_{3, 2}}(u(x) - u(y))^2\nu(dx)\nu(dy)\\
 = (|I_1| - \e|I_2|)X_3 + hX_1 +hX_4 + (|I_4| - \e|I_3|)X_2 - 2Y_2Y_4 + 2\e{Y_2Y_3} \\
 \ge   \frac{s - \e(1 - s)}{1 - s}(Y_3)^2 + \frac 1s(Y_1)^2 + \frac 1s(Y_4)^2 + \frac{s - \e(1 - s)}{1- s}(Y_2)^2 - 2Y_2Y_4 + 2\e{Y_2Y_3}
\end{multline}
If $\e = 0$ in the last line of \eqref{EF.eq50}, we have
\begin{align*}
 &\frac{s}{1 - s}(Y_3)^2 + \frac 1s(Y_1)^2 +  \frac 1s(Y_4)^2 + \frac{s}{1 - s}(Y_2)^2 - 2Y_2Y_4\\
 = &\frac{s}{1 - s}(Y_3)^2 + \frac 1s(Y_1)^2 + (Y_4)^2 + \frac 1{s(1 - s)}((1 - s)Y_4 - sY_2)^2
 \end{align*}
 This is a positive definite quadratic form of $Y_1, Y_2, Y_3, Y_4$ and hence so is the last line of \eqref{EF.eq50} for a small $\e$ which does not depend on $a$ nor $h$.
 \enddemo
 
 \demo[Proof of Theorem~\ref{T:4.1}]
 Define
 \[
 \uJ(x, y) = \begin{cases}
 0  \quad&\,\,\text{if $(x, y) \in N_s$},\\
 J_*(x, y) \quad&\,\,\text{otherwise}.
 \end{cases}
 \]
 It is enough to show the theorem in the case $J = \uJ$. By Lemma~\ref{EF.lemma10}, 
 \begin{multline*}
 \int_{\vp_w(\sd{D_0}{D_s})} (u(x) - u(y))^2\nu(dx)\nu(dy) \le  \int_{\vp_w(D_0)} (u(x) - u(y))^2\nu(dx)\nu(dy)\\
\le \int_{\vp_w(\sd{D_0}{D_s})} (u(x) - u(y))^2\nu(dx)\nu(dy) + \int_{\vp_w(D_s)} (u(x) - u(y))^2\nu(dx)\nu(dy)\\ 
\le \Big(1 + \frac 1{\e}\Big)\int_{\vp_w(\sd{D_0}{D_s})} (u(x) - u(y))^2\nu(dx)\nu(dy)
 \end{multline*}
 for any $w \in W_*$ and any measurable function $u: D_0 \to \BbR$. This implies
 \[
 \E^{(\uJ)}(u, u) \le \E_I(u, u) \le \Big(1 + \frac 1{\e}\Big)\E^{(\uJ)}(u, u)
 \]
 for any measurable function $u: I \to \BbR$.
 \enddemo
 
\setcounter{equation}{0}
\section{Example: an infinite counterpart of Section~\ref{S:3}}\label{EIC}
\def\uS{\underline{S}}
\def\oS{\overline{S}}

In this section, we present an example of a jump process/Dirichlet form on the half line $\BbR_+ = [0, \infty)$, where Theorem~\ref{GT.thm10} is applicable. It is an infinite version to the example in Section~\ref{S:3}, i.e. consider a natural infinite Sierpinski gasket, extend the standard resistance form $(\E, \F)$ on the Sierpinski gasket $K$ to it, and take the trace of the extended resistance form on the infinite counterpart of $I = [0, 1]$, which is the half-line $\BbR_+$. Throughout this section, $K$ is the Sierpinski gasket and $(\E, \F)$ is the standard resistance form on the Sierpinski gasket $K$ given in Section~\ref{S:3}.

\definition\label{IFE.def00}
Define $\rho^{(m)}: \BbR^2 \to \BbR^2$ by $\rho^{(m)}(x) = 2^mx$ for $x \in \BbR^2$. Moreover, define $K^{(m)} = \rho^{(m)}(K)$, $\F^{(m)} = \{f{\circ}(\rho^{(m)})^{-1}| f \in \F\}$ and
\[
\E^{(m)}(u, v) = \Big(\frac 35\Big)^m\E(u\circ\rho^{(m)}, v\circ\rho^{(m)})
\]
for $u, v \in \F^{(m)}$. 
\enddefinition

By a scaling argument, $(\E^{(m)}, \F^{(m)})$ is shown to be a resistance form on $K^{(m)}$ and if $f \in \F^{(m + 1)}$, then $f|_{K^{(m)}} \in \F^{(m)}$ and $\E^{(m)}(f|_{K^{(m)}}, f|_{K^{(m)}}) \le \E^{(m + 1)}(f, f)$. \par
Let 
\[
K^{(\infty)} = \bigcup_{m \ge 0} K^{(m)},
\]
which is called the infinite Sierpinski gasket, which is illustrated in the left of Figure~\ref{ISG+}.
 In \cite{Ki25}, a resistance form on $K^{(\infty)}$, which is called the standard resistance form on $K^{(\infty)}$, has been constructed and shown to be a ``limit'' of the sequence $\{(\E^{(m)}, \F^{(m)})\}_{m \ge 0}$. More precisely, we have the next theorem.

\thm[\cite{Ki25}]\label{IFE.thm00}
Define
\begin{multline*}
\F^{(\infty)} = \{u: u \in C(K^{(\infty)}), u|_{K^{(m)}} \in \F^{(m)}\,\,\text{for any $m \ge 0$} \\
\text{and $\lim_{m \to \infty} \E^{(m)}(u|_{K^{(m)}}) < \infty$}\}
\end{multline*}
and
\[
\E^{(\infty)}(u, v) = \lim_{m \to \infty} \E^{(m)}(u|_{K^{(m)}}, v|_{K^{(m)}})
\]
for $u, v \in \F^{(\infty)}$. Then $(\E^{(\infty)}, \F^{(\infty)})$ is a resistance form on $K^{(\infty)}$. Moreover, let $R^{(\infty)}$ be the associated resistance metric on $K^{(\infty)}$. Then
\begin{equation}\label{IFE.eq100}
c_1|x - y|^{\tau} \le R^{(\infty)}(x, y) \le c_2|x - y|^{\tau}
\end{equation}
for any $x, y \in K^{(\infty)}$, where $\tau:= \frac{\log (5/3)}{\log 2}$ as in \eqref{e:tau}.
\endthm

The resistance form $(\E^{(\infty)}, \F^{(\infty)})$ on $K^{(\infty)}$ is called the standard resistance form on the infinite Sierpinski gasket.   The local regular Dirichlet form associated with the resistance form $(\E^{(\infty)}, \F^{(\infty)})$ is the one associated with the Brownian motion on $K^{(\infty)}$ constructed by \cite{BP}. See \cite{Ki25} for details.

\par

Our example of a Dirichlet form on $\BbR_+$ is the trace of $(\E^{(\infty)}, \F^{(\infty)})$ on $\BbR_+ \times \{0\} \subseteq K^{(\infty)}$.  As in the case of $(\E|_I, \F|_I)$, we have the following theorem by \cite[Theorem~8.4]{Ki16}.

\remark
For simplicity, we identify $\BbR_+ \times \{0\}$ with $\BbR_+$ hereafter. 
\endremark

\thm[Trace of $(\E^{(\infty)}, \F^{(\infty)})$]\label{IFE.thm10}
Define
\[
\F^{(\infty)}_{\BbR_+} = \{v : \text{ there exists $u \in \F^{(\infty)}$ such that $v = u|_{\BbR_+}$}\}
\]
and
\[
\E^{(\infty)}_{\BbR_+}(v) = \inf\{\E^{(\infty)}(u, u) : \  u \in \F^{(\infty)}, u|_{\BbR_+} = v\}
\]
for $v \in \F^{(\infty)}_{\BbR_+}$. Then $(\E^{(\infty)}_{\BbR_+}, \F^{(\infty)}_{\BbR_+})$ is a resistance form on $\BbR_+$.
\endthm

Note that the resistance metric associated with the resistance form $(\E^{(\infty)}_{\BbR_+}, \F^{(\infty)}_{\BbR_+})$ is the restriction of $R^{(\infty)}$ to $\BbR_+$. Consequently, the inequality \eqref{IFE.eq100} holds for any $x, y \in \BbR_+$ and the resistance metric gives $\BbR_+$ the same topology as the Euclidean metric.\par
By \cite[Theorem~9.4]{Ki16}, we obtain a regular Dirichlet form from the resistance form $(\E^{(\infty)}_{\BbR_+}, \F^{(\infty)}_{\BbR_+})$ as follows.

\thm\label{IFE.thm20}
Let $\nu_*$ be the restriction of the $1$-dimensional Lebesgue measure to $\BbR_+$. Define
\[
\E^{(\infty)}_{\BbR_+, *}(u, v) = \E^{(\infty)}_{\BbR_+}(u, v) + \int_{\BbR_+} u(x)v(x)\nu_*(dx)
\]
for any $u, v \in \F^{(\infty)}_{\BbR_+} \cap L^2(\BbR_+, \nu_*)$. Define $\D^{(\infty)}_{\BbR_+}$ be the completion of $\F^{(\infty)}_{\BbR_+} \cap C_0(\BbR_+)$ with respect to the inner-product $\E^{(\infty}_{\BbR_+, *}$. Then $(\E^{(\infty)}_{\BbR_+}, \D^{(\infty)}_{\BbR_+})$ is a regular Dirichlet form on $L^2(\BbR_+, \nu_*)$.
\endthm

To describe an expression of $\E^{(\infty)}_{\BbR_+}$ by a jump kernel, we need the following decomposition of $(\BbR_+)^2$.
 
\definition\label{IFE.def10}
For $i \in \BbN$ and $n \in \BbZ$. Define
\[
\uS_{n, i} = \bigg(\frac{2i - 1}{2^n}, \frac{2i}{2^n}\bigg] \times \bigg[\frac{2i - 2}{2^n}, \frac{2i - 1}{2^n}\bigg),
\]
\[
\oS_{n, i} = \bigg[\frac{2i - 2}{2^n}, \frac{2i - 1}{2^n}\bigg) \times \bigg(\frac{2i - 1}{2^n}, \frac{2i}{2^n}\bigg]
\]
and
\[
S_{n, i} = \uS_{n, i} \cup \oS_{n, i}.
\]
\enddefinition

\begin{figure}
\centering
\includegraphics[width = 300pt]{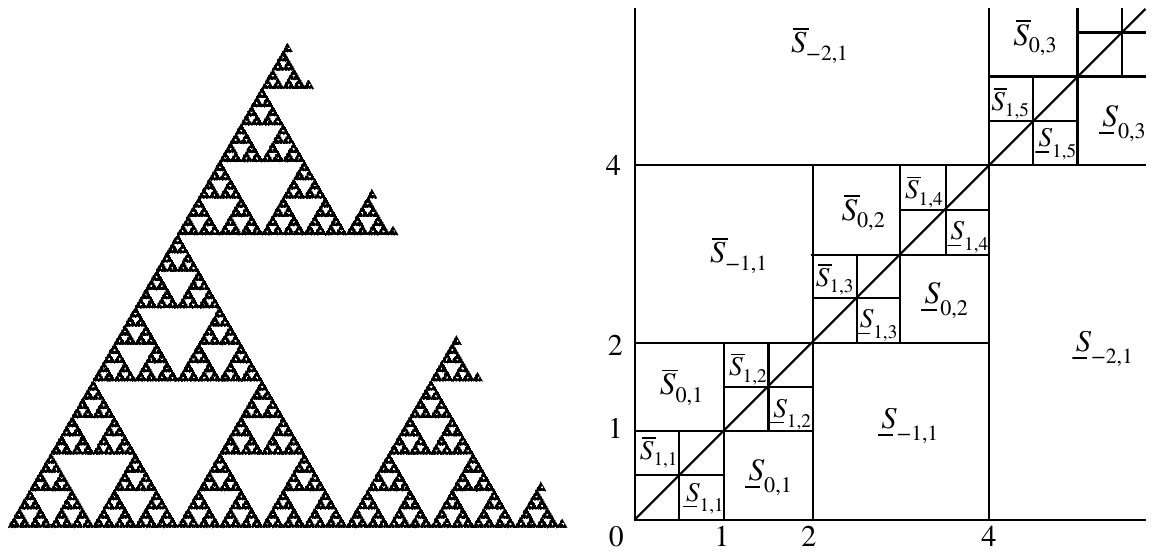}
\caption{The infinite Sierpinski gasket $K^{(\infty)}$ and the partition of $(\BbR^+)^2$}\label{ISG+}
\end{figure}

Then $S_{n, i} \cap S_{m, j} = \emptyset$ if and only if $(n, i) \neq (m, j)$ and
\[
\bigsqcup_{i \in \BbN, n \in \BbZ} S_{n, i} = (\BbR_+)^2 \backslash \{(x, x)| x \in (0, \infty)\}
\]
This decomposition of $[0, \infty)^2$, illustrated in the right of Figure~\ref{ISG+}, 
is a natural extension of the decomposition \eqref{TSS.eq90} of $[0, 1]^2$.

The following expression of $\E^{(\infty)}_{\BbR_+}$ has been obtained in \cite{Ki25}.

\thm\label{IFE.thm30}
Define $J^{(\infty)}: \sd{[0, \infty)^2}{\{(x, x)| x \in [0, \infty)\}} \to [0, \infty)$ by
\begin{equation}\label{IFE.eq200}
J^{(\infty)}(x, y) = \frac{35}{16}\cdot\frac{14}{17}\Big(\frac {20}3\Big)^{n - 1}
\end{equation}
for $n \in \BbZ$ and $(x, y) \in \bigcup_{i \in \BbN}S_{n, i}$. Then
\begin{equation}\label{IFE.eq300}
\E^{(\infty)}_{\BbR_+}(u, v) = \int_{[0, \infty)^2}(u(x) - u(y))(v(x) - v(y))J^{(\infty)}(x, y)\nu_*(dx)\nu_*(dy)
\end{equation}
for any $u, v \in \D^{(\infty)}_{\BbR_+}$.
\endthm

\remark
\rm In the definition of $J^{(\infty)}$ above, we leave the produce of two fractions not reduced so that one can easily compare it with the expression of its counterpart $J_*$ on the Sierpinski gasket in \eqref{TSS.eq200}.
\endremark 
 
Now we are in a positive to give estimates of the transition density associated with the Dirichlet form $(\E^{(\infty)}_{\BbR_+}, \D^{(\infty)}_{\BbR_+})$. The expression \eqref{IFE.eq300} enable us to apply Theorem~\ref{GT.thm10}, whose prerequisites, ${\rm UHK}(\phi)$ and ${\rm NLHK} (\phi)$ have been shown in \cite{Ki25} with $\phi(r) = r^{\alpha}$ with $\alpha:=1+\tau= \frac{\log (10/3)}{\log 2}$.

\thm\label{IFE.thm50}
There exists a jointly continuous transition density $p^{(\infty)}_{\BbR_+}(t, x, y)$ associated with the Dirichlet form $(\E^{(\infty)}_{\BbR_+}, \D^{(\infty)}_{\BbR_+})$ on $L^2(\BbR_*, \nu_*)$. Moreover, there exists $c_{\ref{IFE.eq400}} > 0$ such that
\begin{equation}\label{IFE.eq400}
p^{(\infty)}_{\BbR_+}(t, x, y) \le c_{\ref{IFE.eq400}}\min\left\{ t^{-1/\alpha} , \,  \frac{t}{|x - y|^{1+\alpha}}\right\}
\end{equation}
for every $t>0$ and  $(x, y) \in [0, \infty)^2$, and there exist positive constants $c_{\ref{IFE.eq500}}$, $\e_{\ref{IFE.eq500}.1}$ and $\e_{\ref{IFE.eq500}.2} \in (0, \e_{\ref{IFE.eq500}.1}/2)$ such that
\begin{equation}\label{IFE.eq500}
p^{(\infty)}_{\BbR_+}(t, x, y) \ge c_{\ref{IFE.eq500}}
\begin{cases}
t^{-1/\alpha}\quad&\text{if $|x - y| \le \e_{\ref{IFE.eq500}.1}t^{1/\alpha}$}\\
tJ^{(\infty)}(x, y, \e_{\ref{IFE.eq500}.2}t^{1/\a})\quad&\text{otherwise}.
\end{cases}
\end{equation}
\endthm

\demo
The existence of jointly continuous transition density is due to \cite[Theorem~10.4]{Ki16}. The estimates ${\rm UHK}(\phi)$, which is \eqref{IFE.eq400}, and ${\rm NLHK} (\phi)$, which is the upper inequality of \eqref{IFE.eq500}, have been shown in \cite{Ki25}. The rest is immediate by Theorem~\ref{GT.thm10}.
\enddemo

Additionally, exactly the same arguments as in the proof of Proposition~\ref{TSS.prop10} implies UJS condition for $J^{(\infty)}$.

\prop\label{IFE.prop10}
 The jump kernel $J^{(\infty)}(x, y)$ satisfies the following UJS condition:
 \[
 J^{(\infty)} (x, y) \leq \frac2r \int_{B(x, r)} J^{(\infty)}(z, y)\nu_*(dy)
 \] 
for any $(x, y) \in (\BbR_+)^2$ with $x \neq y$ and $r > 0$ with $0 < r < |x - y|/2$.
\endprop

However, as in the previous examples, there is no advantage of UJS in improving the off-diagonal part of \eqref{IFE.eq500}. 

\medskip

Finally the consistency between the near-diagonal and the off-diagonal estimates in \eqref{IFE.eq500} fails as in the case bounded case shown in Proposition~\ref{TSS.prop20}.

\prop\label{IFE.prop20}
For any $\rho \in (0, \frac 12)$, 
\[
\inf_{(x, y) \in \sd{(\BbR_+)^2}{\!\rm diag}}
 V(x, |x - y|)\phi(|x - y|)J^{(\infty)}(x, y, \rho{|x - y|}) = 0
\]
\endprop

Its proof is exactly the same as that of Proposition~\ref{TSS.prop20}.

\end{document}